\theoremstyle{plain}
\newtheorem{theorem}{Theorem}
\newtheorem{lemma}{Lemma}
\newtheorem{corollary}{Corollary}
\newtheorem{conjecture}{Conjecture}
\newtheorem{question}{Question}
\theoremstyle{definition}
\newtheorem{definition}{Definition}
\theoremstyle{remark}
\begin{document}

    \title{Large induced forests in planar multigraphs}
    \author{Mikhail Makarov\footnote{mikhail.makarov.math@gmail.com}}
    \date{}
    \maketitle

    \begin{abstract}
        For a graph $G$ on $n$ vertices, denote by $a(G)$ the number of vertices in the largest induced forest in $G$. The Albertson-Berman conjecture, which has been open since 1979, states that $a(G) \geq \frac{n}{2}$ for every simple planar graph $G$. We show that the version of this problem for multigraphs (allowing parallel edges) is easily reduced to the problem about the independence number of simple planar graphs. Specifically, we prove that $a(M) \geq \frac{n}{4}$ for every planar multigraph $M$ and that this lower bound is tight. Then, we study the case when the number of pairs of vertices with parallel edges, which we denote by $k$, is small. In particular, we prove the lower bound $a(M) \geq \frac{2}{5}n-\frac{k}{10}$ and that the Albertson-Berman conjecture for simple graphs, assuming that it holds, would imply the lower bound $a(M) \geq \frac{n-k}{2}$ for multigraphs, which would be better than the general lower bound when $k$ is small. Finally, we study the variant of the problem where the plane multigraphs are prohibited from having $2$-faces, which is the main non-trivial problem that we introduce in this article. For that variant without $2$-faces, we prove the lower bound $a(M) \geq \frac{3}{10}n+\frac{7}{30}$ and give a construction of an infinite sequence of multigraphs with $a(M)=\frac{3}{7}n+\frac{4}{7}$.
    \end{abstract}

    \section{Introduction}
        For a graph $G$, denote by $a(G)$ the number of vertices in the largest induced forest in $G$.
        
        \begin{conjecture}[Albertson-Berman, AB, \cite{ab_conjecture}]\label{conjecture:ab}
            For every simple planar graph $G$ on $n$ vertices, we have $a(G) \geq \frac{n}{2}$.
        \end{conjecture}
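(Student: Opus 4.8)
The plan is to attack the statement in its complementary form: proving $a(G)\ge \frac{n}{2}$ is equivalent to showing that every simple planar graph has a \emph{feedback vertex set} (a set meeting every cycle) of size at most $\frac{n}{2}$. I would first pin down the correct order of magnitude with a soft argument and then try to push the constant from $\frac{2}{5}$ up to $\frac{1}{2}$ via a structural analysis of a minimal counterexample. For the soft bound, recall that Borodin's theorem gives every planar graph an acyclic $5$-colouring, i.e.\ a proper colouring in which the union of any two colour classes induces a forest. Taking the two largest of the five classes yields an induced forest on at least $\frac{2}{5}n$ vertices, so $a(G)\ge\frac{2}{5}n$ is essentially free; the entire difficulty lies in closing the gap from $\frac{2}{5}$ to $\frac{1}{2}$.

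For the main argument I would take a counterexample $G$ minimising $n$ (and, after routine checks, minimising the number of edges among such). The first step is a battery of local \emph{reductions} showing that $G$ avoids all easy configurations: that $G$ is $2$-connected, has minimum degree at least $3$, contains no vertex of degree $2$, and contains no small separating or triangular structure that can be absorbed into a forest. Each reduction is governed by the same extension principle: if $H$ is obtained from $G$ by deleting a configuration $C$ on $c$ vertices, and every maximum induced forest of $H$ can be enlarged by at least $c/2$ vertices of $C$ while remaining acyclic, then $a(G)\ge a(H)+\tfrac{c}{2}\ge\frac{n-c}{2}+\frac{c}{2}=\frac{n}{2}$, contradicting minimality. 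Thus every reduction is really the assertion that a local piece can be removed and then at least half of it re-added without creating a cycle.

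The second step is a discharging argument establishing that a planar graph free of every such reducible configuration cannot exist. Assign to each vertex $v$ the charge $d(v)-6$ and to each face $f$ the charge $2d(f)-6$; since $\sum_v(d(v)-6)+\sum_f(2d(f)-6)=6E-6V-6F=-6(V-E+F)=-12<0$ by Euler's formula, the total charge is negative. I would then design redistribution rules (faces sending charge to incident low-degree vertices, and so on) so that, in the absence of every reducible configuration from the first step, each vertex and each face ends with nonnegative charge, the desired contradiction. The extremal examples, namely disjoint copies of $K_4$ (for which $a(K_4)=2=\tfrac{n}{2}$), show that the target $\tfrac12$ is best possible and indicate which configurations, dense clusters of triangles around degree-$3$ vertices, the rules must be calibrated against.

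The hard part, and the reason the conjecture has resisted since 1979, is precisely the extension lemma underlying the reductions. Unlike the independence number, where an excluded vertex can always be discarded and re-added locally, yielding the clean bound $a(G)\ge\alpha(G)\ge\frac{n}{4}$ from the four-colour theorem, the acyclicity constraint defining a forest is \emph{global}: re-adding vertices of a deleted configuration may close a long cycle running through the portion of the forest already chosen in the rest of the graph, so no purely local reduction guarantees the $c/2$ gain. Making the reductions robust to this global interaction, equivalently producing a list of reducible configurations whose discharging closes the full gap to $\tfrac12$ rather than merely to $\tfrac{2}{5}$ (or the slightly better constants known in the literature), is the genuine obstacle, and I would not expect the naive form of this plan to reach $\frac{n}{2}$ without substantial new structural input.
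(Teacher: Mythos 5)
This statement is the Albertson--Berman conjecture itself: it is labelled as a \emph{conjecture} in the paper, it has been open since 1979, and the paper contains no proof of it (nor does it claim one -- the paper only studies multigraph analogues and consequences that \emph{would} follow if the conjecture were true, as in Corollary~\ref{corollary:ab_conjecture_implies_lower_bound_for_multigraphs_n/2-k/2}). So there is no ``paper's own proof'' to compare against, and your proposal should not be read as competing with one.

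As a proof attempt, your proposal has a genuine and, by your own admission, fatal gap: the entire mathematical content is deferred. The soft bound $a(G)\ge\frac{2}{5}n$ from Borodin's acyclic $5$-colouring is correct and is exactly the best-known-type bound the paper itself invokes (Corollary~\ref{corollary:lower_bound_for_multigraphs_2n/5-3k/5}), but everything beyond that is a template rather than an argument. You never exhibit a single reducible configuration together with a verified extension lemma, and you never state discharging rules, let alone check that they leave every vertex and face with nonnegative charge in the absence of those configurations. Your diagnosis of \emph{why} this fails -- that acyclicity is a global constraint, so re-adding half of a deleted configuration can close a long cycle through the forest already selected elsewhere, defeating any purely local reduction -- is accurate and is precisely the obstruction that has kept the conjecture open. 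But identifying the obstruction is not the same as overcoming it. The proposal is an honest research plan, not a proof; the statement remains a conjecture.
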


        The AB conjecture has been open since 1979.

        Motivated by the AB conjecture, we consider the version of this problem for multigraphs, allowing parallel edges (but still not allowing loops). It turns out that it is significantly easier for multigraphs than for simple graphs, and it is easily reduced to the problem about the independence number of simple planar graphs. In Section~\ref{section:general_lower_bound_a}, we give a full solution to this problem for planar multigraphs. In Section~\ref{section:variants_reducing_to_independence_number_of_simple_planar_graphs}, we study some variants of the problem that are also easily reduced to problems about the independence number of simple planar graphs. Then, in Section~\ref{section:lower_bound_a_for_small_number_of_pairs_of_vertices_with_parallel_edges}, we obtain better lower bounds for the case when the number of pairs of vertices with parallel edges between them is small. Finally, in Section~\ref{section:without_2-faces}, we study the variant of the problem without $2$-faces, which is the main non-trivial problem that we introduce in this article.

    \section{The general lower bound and its tightness}\label{section:general_lower_bound_a}
        Denote by $\alpha(G)$ the independence number of $G$, which is the number of vertices in the largest independent set in $G$.

        \begin{lemma}\label{lemma:multigraph_to_simple_graph_a_alpha}
            For every planar multigraph $M$, there exists a simple planar graph $G_M$ with the same number of vertices such that $a(M) \geq \alpha(G_M)$.
        \end{lemma}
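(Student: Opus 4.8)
The plan is to take $G_M$ to be the underlying simple graph of $M$: keep the same vertex set, and for each pair of vertices joined by at least one edge in $M$, put a single edge between them in $G_M$ (equivalently, delete all but one edge from every parallel class). First I would verify the three required properties of $G_M$. It has the same vertex set as $M$ by construction, so the vertex counts agree, and it is simple by construction. For planarity, I would fix a planar embedding of $M$ and observe that deleting edges (in particular deleting redundant parallel copies) never destroys planarity, so $G_M$ inherits a planar embedding from that of $M$.

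The key step is to relate independent sets of $G_M$ to induced forests of $M$. Let $S$ be any independent set of $G_M$. I claim that $M[S]$ has no edges at all: if $u,v \in S$ were joined by an edge of $M$, then by the definition of the underlying simple graph $u$ and $v$ would be adjacent in $G_M$, contradicting independence of $S$. Hence $M[S]$ is edgeless, and an edgeless graph is vacuously a forest. Therefore $S$ is the vertex set of an induced forest of $M$, which gives $a(M) \ge |S|$. Taking $S$ to be a maximum independent set of $G_M$ yields $a(M) \ge \alpha(G_M)$, as desired.

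Since the argument uses only the trivial fact that an edgeless induced subgraph is a forest, there is no serious obstacle in this lemma; its content is simply the observation that, once one restricts to an independent set, parallel edges become irrelevant to the existence of a large induced forest. The one point deserving a sentence of care is the planarity claim, and even there the deletion of edges manifestly preserves a fixed embedding. I would also remark that the underlying simple graph is the right choice to carry into the later sections: among all simple graphs $H$ on the vertex set of $M$ for which every independent set of $H$ induces an edgeless subgraph of $M$ — equivalently, those $H$ containing every support edge of $M$ — the underlying simple graph is the sparsest, and hence has the largest independence number, so $\alpha(G_M)$ is as strong a lower bound as this edgeless reduction can provide. The genuine work lies downstream: combining this bound with an estimate on $\alpha$ for simple planar graphs (for instance $\alpha \ge n/4$ via the four colour theorem) and exhibiting matching multigraphs by doubling every edge of an extremal simple graph, rather than in the reduction itself.
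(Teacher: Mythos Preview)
Your proof is correct and follows essentially the same approach as the paper: take $G_M$ to be the underlying simple graph of $M$, note it is planar, and observe that any independent set of $G_M$ induces an edgeless (hence acyclic) subgraph of $M$, giving $a(M)\ge\alpha(G_M)$. The additional commentary on optimality of this choice and the downstream use of $\alpha\ge n/4$ is accurate but extraneous to the lemma itself.
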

        \begin{proof}
            Consider the simple graph $G_M$ obtained from $M$ by deduplicating parallel edges (keeping a single edge for each pair of vertices with parallel edges). Clearly, $G_M$ is planar.
            
            Let $S$ be a set of vertices that is a maximum independent set in $G_M$, that is, $G_M[S]$ is an induced edgeless graph. Clearly, $S$ induces an edgeless graph in $M$ as well, that is, $F=M[S]$ is an induced edgeless graph in $M$, and hence $F$ is an induced forest in $M$. Therefore, we have $a(M) \geq |V(F)|=|S|=\alpha(G_M)$, as claimed.
        \end{proof}

        \begin{lemma}\label{lemma:simple_graph_to_multigraph_a_alpha}
            For every simple planar graph $G$, there exists a planar multigraph $M_G$ with the same number of vertices such that $a(M_G)=\alpha(G)$.
        \end{lemma}
        \begin{proof}
            Consider the multigraph $M_G$ obtained from $G$ by duplicating all edges.
            
            It is known that duplicating an edge preserves planarity~\cite[Proposition~7.3.1, p.~310]{gross_yellen_anderson_book_graph_theory}. Therefore, $M_G$ is planar.

            Any two adjacent vertices in $G$ induce a cycle of length $2$ in $M_G$. Therefore, any set of vertices inducing a forest in $M_G$ must induce an edgeless graph in $M_G$ and in $G$. Obviously, the converse also holds: any set of vertices inducing an edgeless graph in $G$ induces an edgeless graph in $M_G$ and hence a forest in $M_G$. Therefore, we have $a(M_G)=\alpha(M_G)=\alpha(G)$, as claimed.
        \end{proof}

        \begin{theorem}\label{theorem:planar_multigraphs_a_geq_n/4_tight}
            For every planar multigraph $M$ on $n$ vertices, we have $a(M) \geq \frac{n}{4}$. That lower bound is tight as there exists an infinite sequence of planar multigraphs with $a(M)=\frac{n}{4}$.
        \end{theorem}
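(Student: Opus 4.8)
The plan is to prove the theorem in two separate parts: the lower bound and its tightness, using the two lemmas that have already been established.

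For the lower bound $a(M) \geq \frac{n}{4}$, the natural approach is to combine Lemma~\ref{lemma:multigraph_to_simple_graph_a_alpha} with the classical Four Color Theorem. Given an arbitrary planar multigraph $M$ on $n$ vertices, I would first invoke Lemma~\ref{lemma:multigraph_to_simple_graph_a_alpha} to obtain a simple planar graph $G_M$ on the same $n$ vertices with $a(M) \geq \alpha(G_M)$. It then suffices to show $\alpha(G_M) \geq \frac{n}{4}$. This follows immediately from the Four Color Theorem: since $G_M$ is simple and planar, its vertices can be properly colored with at most $4$ colors, so by the pigeonhole principle some color class is an independent set of size at least $\frac{n}{4}$. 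Chaining the two inequalities gives $a(M) \geq \alpha(G_M) \geq \frac{n}{4}$, as desired.

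For tightness, the strategy is to exhibit, via Lemma~\ref{lemma:simple_graph_to_multigraph_a_alpha}, an infinite family of planar multigraphs achieving equality. By that lemma, it is enough to produce an infinite sequence of \emph{simple} planar graphs $G$ with $\alpha(G) = \frac{n}{4}$, since then the doubled multigraph $M_G$ satisfies $a(M_G) = \alpha(G) = \frac{n}{4}$ on the same number of vertices. The canonical candidate is a disjoint union of $K_4$'s: for $G$ equal to $t$ disjoint copies of $K_4$, we have $n = 4t$ and $\alpha(G) = t = \frac{n}{4}$, since each $K_4$ contributes exactly one vertex to any independent set. Letting $t$ range over all positive integers yields the required infinite sequence. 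I would briefly note that $K_4$ is planar, hence so is any disjoint union of copies of it.

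I do not expect a serious obstacle here, since both directions reduce cleanly to the two preceding lemmas together with standard facts. The only point requiring mild care is the tightness direction: one must confirm that the extremal simple graphs indeed attain $\alpha(G) = \frac{n}{4}$ exactly, and that disjointness of the components makes the independence number additive so that no larger independent set can arise across components. The disjoint-$K_4$ construction handles this transparently, so the overall argument should be short.
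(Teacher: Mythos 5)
Your proposal is correct and follows essentially the same route as the paper: Lemma~\ref{lemma:multigraph_to_simple_graph_a_alpha} plus the bound $\alpha(G) \geq \frac{n}{4}$ (a consequence of the Four Color Theorem) for the lower bound, and Lemma~\ref{lemma:simple_graph_to_multigraph_a_alpha} applied to disjoint unions of $K_4$ for tightness. The only cosmetic difference is that the paper cites the known extremal simple planar graphs in the proof and presents the explicit disjoint-$K_4$ construction as a remark immediately afterwards, whereas you inline it.
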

        \begin{proof}
            By Lemma~\ref{lemma:multigraph_to_simple_graph_a_alpha}, there exists a simple planar graph $G_M$ with the same number of vertices $n$ such that $a(M) \geq \alpha(G_M)$. It is well-known that the lower bound $\alpha(G) \geq \frac{n}{4}$ holds for every simple planar graph $G$~\cite[Proposition~1]{bickle_independence_number_of_max_planar_graphs}. Therefore, we have $a(M) \geq \alpha(G_M) \geq \frac{n}{4}$, as claimed.

            To show that the lower bound $a(M) \geq \frac{n}{4}$ is tight, we prove that there exists an infinite sequence of planar multigraphs with $a(M)=\frac{n}{4}$. First, take an infinite sequence of simple planar graphs $\{G_k\}$ with $\alpha(G_k)=\frac{n}{4}$, that is, on which the lower bound $\alpha(G) \geq \frac{n}{4}$ is attained. It is known that such graphs exist~\cite[Section~1]{bickle_independence_number_of_max_planar_graphs}. Now, by Lemma~\ref{lemma:simple_graph_to_multigraph_a_alpha}, for each $k$, there exists a planar multigraph $M_k=M_{G_k}$ with the same number of vertices $n$ such that $a(M_k)=\alpha(G_k)=\frac{n}{4}$, which means that $\{M_k\}$ is the desired infinite sequence of planar multigraphs.
        \end{proof}

        In the proof of Theorem~\ref{theorem:planar_multigraphs_a_geq_n/4_tight}, we deliberately did not take any specific tight examples of simple planar graphs attaining $\alpha(G)=\frac{n}{4}$ in order to show that any such tight examples work. Now, in order to show an explicit construction of the tight infinite sequence of multigraphs $\{M_k\}$ attaining $a(M_k)=\frac{n}{4}$, we choose a specific sequence $\{G_k\}$. The standard choice for $G_k$ is a disjoint union of $k$ copies of $K_4$. Then, following the construction of $M_G$ in the proof of Lemma~\ref{lemma:simple_graph_to_multigraph_a_alpha}, we obtain that $M_k$ is a disjoint union of $k$ copies of $K_4$ with all edges duplicated. Other tight examples of simple planar graphs attaining $\alpha(G)=\frac{n}{4}$ are known~\cite[Section~1]{bickle_independence_number_of_max_planar_graphs}, and any of them would similarly produce an explicit tight example of a multigraph attaining $a(M)=\frac{n}{4}$.

        In the proof of Theorem~\ref{theorem:planar_multigraphs_a_geq_n/4_tight}, we used the lower bound $\alpha(G) \geq \frac{n}{4}$ for simple planar graphs. This lower bound is a corollary of the Four Color Theorem~\cite{appel_haken_4_color_theorem_part1, appel_haken_4_color_theorem_part2}. To derive this lower bound from the Four Color Theorem, we take the set of the vertices of the largest color class in any $4$-coloring of $G$. Furthermore, this is the only known proof of this lower bound; there is no known proof of this lower bound that does not use the Four Color Theorem~\cite[abstract]{cranston_rabern_planar_independence_number_3_13}. So, a natural question to ask is whether there is a proof of Theorem~\ref{theorem:planar_multigraphs_a_geq_n/4_tight} that avoids using the lower bound $\alpha(G) \geq \frac{n}{4}$ and the Four Color Theorem. We do not know such a proof, and the answer is most likely negative because our lower bound $a(M) \geq \frac{n}{4}$ from Theorem~\ref{theorem:planar_multigraphs_a_geq_n/4_tight} does not just follow from the lower bound $\alpha(G) \geq \frac{n}{4}$, but is in fact equivalent to it. To show that, we prove the implication in the other direction, that is, that the lower bound $a(M) \geq \frac{n}{4}$ implies the lower bound $\alpha(G) \geq \frac{n}{4}$. Take an arbitrary simple planar graph $G$ on $n$ vertices. By Lemma~\ref{lemma:simple_graph_to_multigraph_a_alpha}, there exists a planar multigraph $M_G$ with the same number of vertices $n$ such that $a(M_G)=\alpha(G)$. Now, if we apply the lower bound $a(M) \geq \frac{n}{4}$ to $M_G$, then we get $\alpha(G)=a(M_G) \geq \frac{n}{4}$. So, the lower bound $a(M) \geq \frac{n}{4}$ implies the lower bound $\alpha(G) \geq \frac{n}{4}$. This means that, if there is a proof of the lower bound $a(M) \geq \frac{n}{4}$ avoiding using the lower bound $\alpha(G) \geq \frac{n}{4}$ and avoiding using the Four Color Theorem, then, as an easy corollary of it, we would get a proof of the lower bound $\alpha(G) \geq \frac{n}{4}$ also avoiding using the Four Color Theorem. And, as we already said, no such proof of the lower bound $\alpha(G) \geq \frac{n}{4}$ is known.

    \section{The variants without triangles and for linear forests}\label{section:variants_reducing_to_independence_number_of_simple_planar_graphs}
        There are several variants of the original problem about induced forests in simple planar graphs that have been studied. In particular, the variant where graphs are restricted to be without triangles and the variant for linear forests. We show here that these variants for planar multigraphs are also easily reduced to the corresponding variants for the independence number of simple planar graphs.
        
        \begin{theorem}\label{theorem:planar_multigraphs_without_triangles_a_geq_n+1/3_tight}
            For every planar multigraph $M$ without triangles, we have $a(M) \geq \frac{n+1}{3}$. That lower bound is tight as there exists an infinite sequence of planar multigraphs with $a(M)=\frac{n+1}{3}$.
        \end{theorem}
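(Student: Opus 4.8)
The plan is to mirror the proof of Theorem~\ref{theorem:planar_multigraphs_a_geq_n/4_tight}, replacing the general independence-number bound $\alpha(G) \geq \frac{n}{4}$ by the corresponding bound for triangle-free simple planar graphs. For the lower bound I would start from an arbitrary triangle-free planar multigraph $M$ and apply Lemma~\ref{lemma:multigraph_to_simple_graph_a_alpha} to obtain a simple planar graph $G_M$ on the same vertex set with $a(M) \geq \alpha(G_M)$. The key observation is that deduplicating parallel edges cannot create a triangle: three mutually adjacent vertices in $G_M$ would already be mutually adjacent in $M$ and thus form a triangle there. Hence $G_M$ is a simple triangle-free planar graph, and I would invoke the known tight bound $\alpha(G) \geq \frac{n+1}{3}$ for such graphs (a result of Steinberg and Tovey, which refines the bound $\alpha(G) \geq \frac{n}{3}$ coming from Grötzsch's theorem) to conclude $a(M) \geq \alpha(G_M) \geq \frac{n+1}{3}$.

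For tightness I would run the reduction in the opposite direction using Lemma~\ref{lemma:simple_graph_to_multigraph_a_alpha}. One takes an infinite family $\{G_k\}$ of triangle-free simple planar graphs attaining $\alpha(G_k) = \frac{n+1}{3}$, whose existence is guaranteed by the tightness part of the same external result, and forms $M_k = M_{G_k}$ by duplicating every edge. By Lemma~\ref{lemma:simple_graph_to_multigraph_a_alpha}, $M_k$ is planar with $a(M_k) = \alpha(G_k) = \frac{n+1}{3}$. It remains only to check that each $M_k$ is triangle-free, and indeed doubling edges introduces parallel edges but no new adjacencies between distinct vertices, so no triangle is created; thus $M_k$ is triangle-free because $G_k$ is. This produces the desired infinite sequence.

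I expect the only genuinely non-trivial ingredient to be the external input $\alpha(G) \geq \frac{n+1}{3}$ for triangle-free simple planar graphs together with the existence of tight examples; everything specific to multigraphs is already handled by the two lemmas. The points that require care, rather than difficulty, are the two transfer claims: that triangle-freeness survives deduplication (needed for the lower bound) and that it survives edge-duplication (needed for tightness). Both are immediate from the fact that neither operation alters which pairs of distinct vertices are adjacent, so the reduction to the simple triangle-free case is exact and no $2$-cycle formed by parallel edges can masquerade as a triangle.
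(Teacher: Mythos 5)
Your proposal is correct and follows essentially the same route as the paper: reduce via Lemma~\ref{lemma:multigraph_to_simple_graph_a_alpha} and Lemma~\ref{lemma:simple_graph_to_multigraph_a_alpha}, observe that both constructions preserve triangle-freeness, and invoke the Steinberg--Tovey bound $\alpha(G) \geq \frac{n+1}{3}$ together with its tight examples. The paper simply states the preservation of triangle-freeness as ``easy to see,'' whereas you spell it out; the substance is identical.
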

        \begin{proof}
            It is easy to see that the constructions in Lemma~\ref{lemma:multigraph_to_simple_graph_a_alpha} and Lemma~\ref{lemma:simple_graph_to_multigraph_a_alpha} preserve the triangle-free property. So, similarly to the proof of Theorem~\ref{theorem:planar_multigraphs_a_geq_n/4_tight}, the problem about $a(M)$ for planar multigraphs without triangles is reduced to the problem about the independence number $\alpha(G)$ for simple planar graphs without triangles. It remains only to note that it is known~\cite[Proposition~2a, Proposition~2b, Remark on p.~293]{steinberg_tovey_planar_ramsey_numbers} that $\alpha(G) \geq \frac{n+1}{3}$ for every planar graph $G$ without triangles and that that lower bound is tight as there exists an infinite sequence of planar graphs without triangles with $\alpha(G)=\frac{n+1}{3}$ (in~\cite{steinberg_tovey_planar_ramsey_numbers}, the lower bound is proved as $\alpha(G) \geq \lfloor \frac{n}{3} \rfloor + 1$, but it is easy to see that $\lfloor \frac{n}{3} \rfloor + 1 = \lceil \frac{n+1}{3} \rceil$ for every integer $n$, so their lower bound is equivalent to $\alpha(G) \geq \frac{n+1}{3}$; this more convenient expression $\alpha(G) \geq \frac{n+1}{3}$ for the lower bound is also cited, for example, in~\cite[the beginning of the article]{dmmp_planar_graphs_without_triangles_with_smallest_independence_number}).
        \end{proof}

        Denote by $a_{\ell}(M)$ the number of vertices in the largest induced linear forest in $M$.
        
        \begin{theorem}\label{theorem:planar_multigraphs_al_geq_n/4_tight}
            For every planar multigraph $M$, we have $a_{\ell}(M) \geq \frac{n}{4}$. That lower bound is tight as there exists an infinite sequence of planar multigraphs with $a_{\ell}(M)=\frac{n}{4}$.
        \end{theorem}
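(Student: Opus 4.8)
The plan is to reuse the reduction to the independence number of simple planar graphs exactly as in the proof of Theorem~\ref{theorem:planar_multigraphs_a_geq_n/4_tight}, exploiting two trivial but useful monotonicity facts: every edgeless graph is a linear forest, and every linear forest is a forest. The first fact lets me recycle the lower-bound argument verbatim, and the second lets me recycle the extremal construction verbatim.

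For the lower bound, I would observe that the construction of Lemma~\ref{lemma:multigraph_to_simple_graph_a_alpha} actually delivers more than its statement claims. Let $G_M$ be the simple planar graph obtained from $M$ by deduplicating parallel edges, and let $S$ be a maximum independent set in $G_M$. Then $M[S]$ is edgeless, and an edgeless induced subgraph is in particular an induced linear forest. Hence $a_{\ell}(M) \geq |S| = \alpha(G_M)$. Combining this with the planar independence bound $\alpha(G_M) \geq \frac{n}{4}$ invoked in Theorem~\ref{theorem:planar_multigraphs_a_geq_n/4_tight} immediately yields $a_{\ell}(M) \geq \frac{n}{4}$.

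For tightness, I would take the very same infinite sequence $\{M_k\}$ constructed in the proof of Theorem~\ref{theorem:planar_multigraphs_a_geq_n/4_tight}, namely the disjoint unions of $k$ copies of $K_4$ with all edges duplicated, for which $a(M_k) = \frac{n}{4}$. Since every induced linear forest is in particular an induced forest, we always have $a_{\ell}(M) \leq a(M)$; applying this to $M_k$ gives $a_{\ell}(M_k) \leq a(M_k) = \frac{n}{4}$. Together with the lower bound just proved, this forces $a_{\ell}(M_k) = \frac{n}{4}$, so $\{M_k\}$ is the desired witnessing sequence.

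I do not anticipate any genuine obstacle. The entire argument is packaged in the two observations that sandwich $a_{\ell}$ between $\alpha$ and $a$, collapsing the bound onto the already-known extremal examples; the only points worth stating explicitly are that deduplicating parallel edges does not change the independence number (so $\alpha(M) = \alpha(G_M)$) and that the linear-forest and forest notions are preserved under taking induced subgraphs, both of which are immediate.
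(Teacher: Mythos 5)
Your proposal is correct and follows essentially the same reduction as the paper, which notes that Lemmas~\ref{lemma:multigraph_to_simple_graph_a_alpha} and~\ref{lemma:simple_graph_to_multigraph_a_alpha} carry over verbatim to linear forests; your only (harmless) shortcut is deriving tightness from the sandwich $\alpha(G_{M_k}) \leq a_{\ell}(M_k) \leq a(M_k)$ rather than re-proving the linear-forest analogue of Lemma~\ref{lemma:simple_graph_to_multigraph_a_alpha} directly.
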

        \begin{proof}
            It is easy to see that the statements and proofs of Lemma~\ref{lemma:multigraph_to_simple_graph_a_alpha} and Lemma~\ref{lemma:simple_graph_to_multigraph_a_alpha} still hold if we restrict forests to linear forests and replace $a(M)$ with $a_{\ell}(M)$. So, similarly to the proof of Theorem~\ref{theorem:planar_multigraphs_a_geq_n/4_tight}, the problem about $a_{\ell}(M)$ for planar multigraphs is reduced to the problem about the independence number $\alpha(G)$ for simple planar graphs.
        \end{proof}

    \section{The lower bound for a small number of pairs of vertices with parallel edges}\label{section:lower_bound_a_for_small_number_of_pairs_of_vertices_with_parallel_edges}
        In this section, we consider planar multigraphs with a small number of pairs of vertices with parallel edges and derive another lower bound on $a(M)$ from a lower bound on $a(G)$ for simple planar graphs. This new lower bound on $a(M)$ is better in that case than the general lower bound from Theorem~\ref{theorem:planar_multigraphs_a_geq_n/4_tight}.
        
        First, we observe that there is a straightforward, but suboptimal, derived lower bound for that case, which is as follows. Consider a planar multigraph $M$ on $n$ vertices and with $k$ pairs of vertices that have parallel edges between them. Assume that a lower bound $a(G) \geq a(n)$, where $a(n)$ is a function of $n$ that does not depend on $G$, holds for every simple planar graph $G$. Then deduplicate all parallel edges in $M$ as in the proof of Lemma~\ref{lemma:multigraph_to_simple_graph_a_alpha}, denote the resulting simple planar graph by $G_M$, apply to it the assumed lower bound for simple planar graphs, and get $a(G_M) \geq a(n)$. Then, there exists a set of vertices $S$ of size at least $a(n)$ that induces a forest in $G_M$. For each of the $k$ pairs of vertices with parallel edges in $M$, we remove one of these two vertices if both of them belong to $S$ and if neither of them was already removed in the previous steps for other pairs of vertices. In total, we remove at most $k$ vertices from $S$, and it is easy to see that the resulting set, which we denote by $S'$, induces a forest in $M$. Therefore, we have $a(M) \geq |S'| \geq |S|-k=a(G_M)-k \geq a(n)-k$. In particular, the AB conjecture for simple planar graphs, assuming that it holds, would imply the lower bound $a(M) \geq \frac{n}{2}-k$. Below, we prove another lower bound $a(M) \geq a(n+k)-k$, which is better than this straightforward lower bound $a(M) \geq a(n)-k$.

        \begin{lemma}\label{lemma:multigraph_to_simple_graph_subdividing_duplicate_edges_a}
            For every planar multigraph $M$ on $n$ vertices and with $k$ pairs of vertices that have parallel edges between them, there exists a simple planar graph $G_M$ on $n+k$ vertices such that $a(G_M)=a(M)+k$.
        \end{lemma}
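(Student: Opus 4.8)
The plan is to turn each multi-edge into a triangle by subdividing one of its parallel copies. Concretely, for each of the $k$ pairs $\{u_i,v_i\}$ carrying parallel edges, I would keep a single direct edge $u_iv_i$, replace one further parallel copy by a path $u_i$---$w_i$---$v_i$ through a brand-new vertex $w_i$, and simply delete any remaining parallel copies. Since both subdivision and edge deletion preserve planarity and introduce no loops or repeated edges, the resulting graph $G_M$ is a simple planar graph, and it has exactly $n+k$ vertices. The structural feature I will exploit throughout is that in $G_M$ each triple $\{u_i,v_i,w_i\}$ induces a triangle, while $w_i$ has degree $2$ with its only neighbours being $u_i$ and $v_i$.

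For the inequality $a(G_M)\ge a(M)+k$, I would start from a maximum induced forest $M[S]$ with $|S|=a(M)$. Because the two parallel edges between $u_i$ and $v_i$ already form a cycle of length $2$ in $M$, at most one of $u_i,v_i$ can lie in $S$. Hence, when I enlarge $S$ by all $k$ subdivision vertices, each $w_i$ is adjacent to at most one vertex of $S$ and to none of the other $w_j$, so it attaches as a leaf or an isolated vertex. As the edges of $G_M$ inside $S$ coincide with the edges of $M$ inside $S$ (no direct edge $u_iv_i$ appears, since not both endpoints are present), the set $S\cup\{w_1,\dots,w_k\}$ induces a forest in $G_M$, giving $a(G_M)\ge a(M)+k$.

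The reverse inequality $a(G_M)\le a(M)+k$ is the main obstacle, and I would prove it by extracting an induced forest of $M$ from a maximum induced forest $T=G_M[S^\ast]$. Since $\{u_i,v_i,w_i\}$ is a triangle, $S^\ast$ misses at least one of these three vertices for every $i$. The plan is to delete from $S^\ast$, for each pair $i$, at most one vertex: remove $w_i$ if $w_i\in S^\ast$, and otherwise remove one of $u_i,v_i$ in the case that both belong to $S^\ast$. This discards at most $k$ vertices and leaves a set $S'$ of original vertices in which no pair $\{u_i,v_i\}$ is fully present. Consequently no parallel edge of $M$ survives inside $S'$, so $M[S']$ has the same edge set as the induced subgraph $G_M[S']$; the latter is an induced subgraph of the forest $T$ and hence a forest. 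This yields $a(M)\ge|S'|\ge|S^\ast|-k=a(G_M)-k$, and combining the two inequalities gives the claimed equality.

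The delicate points on which I would focus the write-up are verifying that the deletion rule never removes more than one vertex per pair---which hinges on the fact that whenever $w_i\in S^\ast$ at most one of $u_i,v_i$ can also lie in $S^\ast$, so the two deletion cases never collide---and confirming that after deletion $M[S']$ genuinely coincides with the forest $G_M[S']$ rather than merely sharing its simple underlying structure. I would also record explicitly that deleting surplus parallel copies before subdividing does not change which vertex sets induce forests in $M$, so that the argument applies to pairs carrying more than two parallel edges as well.
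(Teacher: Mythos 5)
Your proposal is correct and follows essentially the same route as the paper: the identical construction (keep one direct edge, subdivide a second parallel copy with a new vertex $w_i$, discard surplus copies), the same leaf-attachment argument for $a(G_M)\ge a(M)+k$, and the same triangle-based deletion rule (remove $w_i$ if present, otherwise one of $u_i,v_i$ if both are present) for the reverse inequality. No substantive differences to report.
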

        \begin{proof}
            For each of the $k$ pairs of vertices with parallel edges between them in $M$, first, we remove the parallel edges to reduce the multiplicity of the parallel edges to $2$, and then we subdivide one of the $2$ remaining parallel edges with a new vertex. Denote the resulting graph by $G_M$. Clearly, $G_M$ is simple. Also, $G_M$ is planar because the operations of removing an edge and subdividing an edge are well-known to preserve planarity. The number of vertices in $G_M$ is clearly $n+k$.

            Take a maximum induced forest $F_M$ in $M$. Consider a pair of vertices $u$ and $v$ with parallel edges between them in $M$. Denote by $w$ the new added vertex from the subdivided edge between $u$ and $v$. The parallel edges form a $2$-cycle, so at most $1$ of the vertices $u$ and $v$ belongs to $F_M$. Then we can add $w$ to $F_M$ without creating any cycles. When we do this for all $k$ pairs of vertices with parallel edges between them in $M$, we get an induced forest in $G_M$, which we denote by $F_G$, that has $k$ more vertices than $F_M$. Therefore, $a(G_M) \geq |V(F_G)| = |V(F_M)|+k = a(M)+k$.

            Now, take a maximum induced forest $F_G$ in $G_M$. Consider a pair of vertices $u$ and $v$ with parallel edges between them in $M$. Denote by $w$ the new added vertex from the subdivided edge between $u$ and $v$. Since $uvw$ is a triangle in $G_M$, at most $2$ of these three vertices belong to $F_G$. If $w$ belongs to $F_G$, then at most one of $u$ and $v$ belongs to $F_G$, and we remove $w$. If both $u$ and $v$ belong to $F_G$, then $w$ cannot belong to $F_G$, and we arbitrarily remove one of $u$ and $v$ (if one of them was not already removed in previous steps for another pair of vertices). In all cases, after the removal of at most $1$ vertex, $w$ does not belong to the resulting forest and at most one of $u$ and $v$ belongs to the resulting forest. So, the parallel edges in $M$ between $u$ and $v$ cannot create a $2$-cycle in the resulting forest. When we do this for all $k$ pairs of vertices with parallel edges between them in $M$, we clearly get an induced forest in $M$, which we denote by $F_M$, and we removed at most $k$ vertices from $F_G$. Therefore, $a(M) \geq |V(F_M)| \geq |V(F_G)|-k=a(G_M)-k$.

            Combining the two obtained opposite inequalities $a(G_M) \geq a(M)+k$ and $a(M)+k \geq a(G_M)$, we get the equality $a(G_M)=a(M)+k$, as claimed.
        \end{proof}

        \begin{theorem}\label{theorem:lower_bound_for_simple_graphs_implies_lower_bound_for_multigraphs_for_small_number_of_pairs_of_vertices_with_parallel_edges}
            Any lower bound $a(G) \geq a(n)$, where $a(n)$ is a function of $n$ that does not depend on $G$, for every simple planar graph $G$ on $n$ vertices implies the lower bound $a(M) \geq a(n+k)-k$ for every planar multigraph $M$ on $n$ vertices and with $k$ pairs of vertices that have parallel edges between them.
        \end{theorem}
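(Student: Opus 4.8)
The plan is to apply Lemma~\ref{lemma:multigraph_to_simple_graph_subdividing_duplicate_edges_a} directly, since that lemma does essentially all of the work. First I would fix an arbitrary planar multigraph $M$ on $n$ vertices with exactly $k$ pairs of vertices joined by parallel edges. By the lemma, there exists a simple planar graph $G_M$ on $n+k$ vertices satisfying $a(G_M) = a(M) + k$.

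The key second step is to apply the assumed lower bound $a(G) \geq a(n)$ to the simple planar graph $G_M$. The crucial point, and really the only subtlety worth flagging, is that $G_M$ has $n+k$ vertices rather than $n$, so the bound must be instantiated at the argument $n+k$, yielding $a(G_M) \geq a(n+k)$. This is precisely where the present bound improves on the straightforward one discussed just before the statement: the subdivision construction in the lemma produces a simple graph on $n+k$ vertices, so we get to feed the assumed bound the larger value $n+k$, whereas the naive deduplication argument only feeds it $n$. This extra $k$ vertices inside the simple graph, rather than being lost, is what makes $a(n+k)-k$ the correct and sharper quantity to aim for.

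Finally, I would combine the two relations. From $a(G_M) = a(M) + k$ and $a(G_M) \geq a(n+k)$ we obtain $a(M) + k \geq a(n+k)$, and rearranging gives $a(M) \geq a(n+k) - k$, as claimed. Since $M$ was an arbitrary planar multigraph on $n$ vertices with $k$ pairs of parallel-edge vertices, this establishes the desired implication in full generality. I do not anticipate any genuine obstacle, as all the combinatorial content is already packaged inside Lemma~\ref{lemma:multigraph_to_simple_graph_subdividing_duplicate_edges_a}; the argument is a short chain consisting of one substitution and one rearrangement, and the only place demanding care is ensuring that the hypothesis is evaluated at $n+k$ rather than at $n$.
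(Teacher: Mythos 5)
Your proposal is correct and matches the paper's own proof essentially verbatim: both invoke Lemma~\ref{lemma:multigraph_to_simple_graph_subdividing_duplicate_edges_a} to get $a(G_M)=a(M)+k$ for a simple planar graph $G_M$ on $n+k$ vertices, apply the hypothesis at $n+k$, and rearrange. No issues.
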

        \begin{proof}
            Consider a planar multigraph $M$ on $n$ vertices and with $k$ pairs of vertices that have parallel edges between them. By Lemma~\ref{lemma:multigraph_to_simple_graph_subdividing_duplicate_edges_a}, there exists a simple planar graph $G_M$ on $n+k$ vertices such that $a(G_M)=a(M)+k$. Applying the lower bound $a(G) \geq a(n)$ to $G_M$, we get $a(G_M) \geq a(n+k)$. Finally, we have $a(M)=a(G_M)-k \geq a(n+k)-k$, as claimed.
        \end{proof}

        \begin{corollary}\label{corollary:lower_bound_for_multigraphs_2n/5-3k/5}
            For every planar multigraph $M$ on $n$ vertices and with $k$ pairs of vertices that have parallel edges between them, we have $a(M) \geq \frac{2}{5}n-\frac{3}{5}k$.
        \end{corollary}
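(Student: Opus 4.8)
The plan is to combine Theorem~\ref{theorem:lower_bound_for_simple_graphs_implies_lower_bound_for_multigraphs_for_small_number_of_pairs_of_vertices_with_parallel_edges} with a concrete linear lower bound of the form $a(G) \geq a(n)$ for simple planar graphs, taking $a(n) = \frac{2}{5}n$. The source of this bound is Borodin's theorem that every planar graph is acyclically $5$-colorable, that is, admits a proper coloring with $5$ colors in which the union of any two color classes induces a forest (contains no bichromatic cycle). Once that ingredient is available, the corollary follows by a direct substitution, so there is essentially no new combinatorial content to prove here.

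First I would establish the simple-graph bound $a(G) \geq \frac{2}{5}n$ for every simple planar graph $G$ on $n$ vertices. Fixing an acyclic $5$-coloring of $G$, the two largest color classes together contain at least $\frac{2}{5}n$ vertices by pigeonhole, and their union induces a forest by the defining property of an acyclic coloring; hence $G$ contains an induced forest on at least $\frac{2}{5}n$ vertices. This is the only external ingredient, and I expect it to be the crux of the argument in the sense that everything rests on it — but since it is a standard, well-documented consequence of Borodin's acyclic $5$-colorability theorem, the "obstacle" amounts to invoking the correct known result rather than proving anything genuinely difficult.

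Second, I would apply Theorem~\ref{theorem:lower_bound_for_simple_graphs_implies_lower_bound_for_multigraphs_for_small_number_of_pairs_of_vertices_with_parallel_edges} with $a(n) = \frac{2}{5}n$, which immediately gives $a(M) \geq a(n+k) - k$ for every planar multigraph $M$ on $n$ vertices with $k$ pairs of vertices joined by parallel edges. Substituting and simplifying yields $a(n+k) - k = \frac{2}{5}(n+k) - k = \frac{2}{5}n - \frac{3}{5}k$, which is exactly the claimed lower bound. The entire final step is thus a one-line computation, and I would note in passing that plugging the AB conjecture's conjectural value $a(n) = \frac{n}{2}$ into the same theorem recovers the stronger conditional bound $a(M) \geq \frac{n-k}{2}$ advertised in the introduction, illustrating that the strength of the multigraph bound is controlled entirely by the quality of the underlying simple-graph bound.
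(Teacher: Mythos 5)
Your proposal is correct and follows exactly the paper's route: invoke Borodin's acyclic $5$-colorability to get $a(G) \geq \frac{2}{5}n$ for simple planar graphs via the two largest color classes, then apply Theorem~\ref{theorem:lower_bound_for_simple_graphs_implies_lower_bound_for_multigraphs_for_small_number_of_pairs_of_vertices_with_parallel_edges} with $a(n)=\frac{2}{5}n$ and compute $\frac{2}{5}(n+k)-k=\frac{2}{5}n-\frac{3}{5}k$. No gaps.
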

        \begin{proof}
            It is an application of Theorem~\ref{theorem:lower_bound_for_simple_graphs_implies_lower_bound_for_multigraphs_for_small_number_of_pairs_of_vertices_with_parallel_edges} to the best known lower bound $a(G) \geq \frac{2}{5}n$ for simple planar graphs, which follows from the existence of an acyclic $5$-coloring~\cite{borodin_acyclic_5-coloring} by taking the two largest color classes (an \emph{acyclic coloring} is defined as a proper coloring where the union of any two color classes induces a forest, or, equivalently, as a proper coloring where any cycle contains at least $3$ colors).
        \end{proof}

        \begin{corollary}\label{corollary:ab_conjecture_implies_lower_bound_for_multigraphs_n/2-k/2}
            The AB conjecture for simple planar graphs, assuming that it holds, would imply that, for every planar multigraph $M$ on $n$ vertices and with $k$ pairs of vertices that have parallel edges between them, we have $a(M) \geq \frac{n-k}{2}$.
        \end{corollary}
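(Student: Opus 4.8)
The plan is to apply Theorem~\ref{theorem:lower_bound_for_simple_graphs_implies_lower_bound_for_multigraphs_for_small_number_of_pairs_of_vertices_with_parallel_edges} directly, taking the function $a(n)$ appearing there to be the bound promised by the AB conjecture. Concretely, I would begin by assuming Conjecture~\ref{conjecture:ab} holds, so that $a(G) \geq \frac{n}{2}$ for every simple planar graph $G$ on $n$ vertices; in the notation of that theorem this means we may take $a(n) = \frac{n}{2}$, which is indeed a function of the number of vertices alone that does not depend on the particular graph $G$.

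Next, I would invoke the theorem to obtain, for every planar multigraph $M$ on $n$ vertices with $k$ pairs of vertices joined by parallel edges, the lower bound $a(M) \geq a(n+k) - k$. The only remaining step is to substitute and simplify: $a(n+k) - k = \frac{n+k}{2} - k = \frac{n-k}{2}$, which is exactly the claimed bound.

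I expect no genuine obstacle here, since the statement is a direct specialization of the general reduction already established. The only point that deserves a moment of care is checking that the AB bound $\frac{n}{2}$ has the form required to feed into Theorem~\ref{theorem:lower_bound_for_simple_graphs_implies_lower_bound_for_multigraphs_for_small_number_of_pairs_of_vertices_with_parallel_edges}, namely that it depends only on the number of vertices, which it plainly does. It is also worth noting, for the reader's benefit, how this compares with the straightforward bound $a(M) \geq a(n) - k = \frac{n}{2} - k$ derived earlier in the section: the improvement is precisely the $\frac{k}{2}$ gain obtained by evaluating the AB bound at $n+k$ rather than at $n$, which in turn reflects the $k$ extra subdivision vertices introduced in the construction of Lemma~\ref{lemma:multigraph_to_simple_graph_subdividing_duplicate_edges_a}.
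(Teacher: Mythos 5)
Your proposal is correct and follows exactly the paper's route: the paper also proves this corollary by directly applying Theorem~\ref{theorem:lower_bound_for_simple_graphs_implies_lower_bound_for_multigraphs_for_small_number_of_pairs_of_vertices_with_parallel_edges} with $a(n)=\frac{n}{2}$, and your substitution $\frac{n+k}{2}-k=\frac{n-k}{2}$ is the whole computation.
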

        \begin{proof}
            It directly follows from Theorem~\ref{theorem:lower_bound_for_simple_graphs_implies_lower_bound_for_multigraphs_for_small_number_of_pairs_of_vertices_with_parallel_edges}.
        \end{proof}

        In fact, the existence of an acyclic $5$-coloring for simple planar graphs implies a better lower bound on $a(M)$ than in Corollary~\ref{corollary:lower_bound_for_multigraphs_2n/5-3k/5} using a different argument than through the lower bound $a(G) \geq \frac{2}{5}n$ and Theorem~\ref{theorem:lower_bound_for_simple_graphs_implies_lower_bound_for_multigraphs_for_small_number_of_pairs_of_vertices_with_parallel_edges}.

        \begin{theorem}\label{theorem:lower_bound_for_multigraphs_2n/5-k/10}
            For every planar multigraph $M$ on $n$ vertices and with $k$ pairs of vertices that have parallel edges between them, we have $a(M) \geq \frac{2}{5}n-\frac{k}{10}$.
        \end{theorem}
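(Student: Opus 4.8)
The plan is to use Borodin's acyclic $5$-coloring of the deduplicated graph \emph{directly}, rather than passing through the weaker bound $a(G)\ge\frac{2}{5}n$ as was done in Corollary~\ref{corollary:lower_bound_for_multigraphs_2n/5-3k/5}. First I would deduplicate the parallel edges of $M$ to obtain a simple planar graph $G_M$ on the same $n$ vertices, exactly as in Lemma~\ref{lemma:multigraph_to_simple_graph_a_alpha}. By the existence of an acyclic $5$-coloring~\cite{borodin_acyclic_5-coloring}, $G_M$ admits a proper $5$-coloring with color classes $C_1,\dots,C_5$ such that for every pair $i<j$ the induced subgraph $G_M[C_i\cup C_j]$ is a forest. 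Write $n_i=|C_i|$, so that $\sum_i n_i=n$.

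The key step is to track where the $k$ parallel-edge pairs land among the color classes. Since the coloring is proper in $G_M$ and the two endpoints of each parallel-edge pair are adjacent in $G_M$, those two endpoints receive distinct colors. Letting $k_{ij}$ denote the number of parallel-edge pairs with one endpoint in $C_i$ and the other in $C_j$, we therefore obtain the crucial identity $\sum_{i<j}k_{ij}=k$: each parallel-edge pair is charged to exactly one unordered pair of color classes. For a fixed pair $(i,j)$, the multigraph $M[C_i\cup C_j]$ is then exactly the forest $G_M[C_i\cup C_j]$ with $k_{ij}$ of its edges doubled, these being the only source of parallel edges inside $C_i\cup C_j$ (again because the endpoints of a parallel-edge pair cannot share a color, so both endpoints lying in $C_i\cup C_j$ forces one into $C_i$ and the other into $C_j$).

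From each of these $k_{ij}$ doubled edges I would delete one endpoint, removing at most $k_{ij}$ vertices in total; call the surviving set $S_{ij}\subseteq C_i\cup C_j$. Then $M[S_{ij}]$ contains no parallel edges, so its underlying simple graph coincides with $G_M[S_{ij}]$, which is an induced subgraph of the forest $G_M[C_i\cup C_j]$ and hence itself a forest. Thus $M[S_{ij}]$ is an induced forest in $M$ of size at least $n_i+n_j-k_{ij}$, giving $a(M)\ge n_i+n_j-k_{ij}$ for every pair $(i,j)$. Finally I would average this bound over all $\binom{5}{2}=10$ pairs: since each $n_i$ occurs in exactly $4$ of them, we have $\sum_{i<j}(n_i+n_j)=4n$, so
\[
a(M)\ \ge\ \frac{1}{10}\sum_{i<j}\bigl(n_i+n_j-k_{ij}\bigr)\ =\ \frac{4n-k}{10}\ =\ \frac{2}{5}n-\frac{k}{10}.
\]

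I expect the only genuine subtlety to be verifying that deleting one endpoint per doubled edge yields an induced forest in $M$ (and not merely in $G_M$), which rests entirely on the claim that every parallel edge inside $C_i\cup C_j$ is one of the $k_{ij}$ counted pairs; everything else is the standard double-counting underlying $a(G)\ge\frac{2}{5}n$. The improvement over Corollary~\ref{corollary:lower_bound_for_multigraphs_2n/5-3k/5} comes precisely from charging each parallel-edge pair to the \emph{single} color-class pair containing it, so that the total penalty $k$ is distributed across all $10$ pairs in the averaging step rather than subtracted in full from one chosen forest.
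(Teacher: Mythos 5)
Your proposal is correct and is essentially identical to the paper's own proof: both deduplicate to $G_M$, take Borodin's acyclic $5$-coloring, use properness to charge each parallel-edge pair to exactly one color-class pair so that $\sum_{i<j}k_{ij}=k$, delete one endpoint per pair to get an induced forest of size $|C_i|+|C_j|-k_{ij}$ in $M$, and average over the $10$ pairs. No differences worth noting beyond wording.
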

        \begin{proof}
            Consider the simple planar graph $G_M$ obtained from $M$ by deduplicating parallel edges as in the proof of Lemma~\ref{lemma:multigraph_to_simple_graph_a_alpha}. Consider an acyclic $5$-coloring~\cite{borodin_acyclic_5-coloring} of $G_M$ and denote by $C_1$, $C_2$, $C_3$, $C_4$, $C_5$ the color classes of that coloring. For $1 \leq i < j \leq 5$, denote by $k_{ij}$ the number of pairs of vertices with one of the vertices in $C_i$, another one in $C_j$, that have parallel edges between them in $M$. Since the coloring is proper, there are no parallel edges (or any edges at all) inside any single color class. Therefore, each pair of vertices with parallel edges between them is counted in exactly one of the $k_{ij}$. Therefore, we have $\sum_{1 \leq i < j \leq 5}{k_{ij}}=k$. By definition of an acyclic coloring, the union of $C_i$ and $C_j$ for any $i$ and $j$ such that $1 \leq i < j \leq 5$ induces a forest in $G_M$. By removing one of the vertices from each of the $k_{ij}$ pairs of vertices that have parallel edges between them, we obtain an induced forest in $M$ with at least $|C_i|+|C_j|-k_{ij}$ vertices. Denote $a_{ij}=|C_i|+|C_j|-k_{ij}$. So, we have $a(M) \geq a_{ij}$ for all $i$ and $j$. Now, we choose the largest number among $a_{ij}$. To bound that largest number $\max_{i,j}(a_{ij})$ from below, we observe that, by the pigeonhole principle, the largest number is always greater than or equal to the average. So, we have
            \begin{multline*}
                a(M) \geq \max_{i,j}(a_{ij}) \geq \frac{1}{10}\sum_{1 \leq i < j \leq 5}{a_{ij}}=\frac{1}{10}\sum_{1 \leq i < j \leq 5}{(|C_i|+|C_j|-k_{ij})}=\\
                =\frac{1}{10}\left(\sum_{1 \leq i < j \leq 5}{(|C_i|+|C_j|)}-\sum_{1 \leq i < j \leq 5}{k_{ij}}\right)=\frac{1}{10}\left(\sum_{1 \leq i < j \leq 5}{(|C_i|+|C_j|)}-k\right)=\\
                =\frac{1}{10}\left(\frac{1}{2}\sum_{1 \leq i \leq 5, 1 \leq j \leq 5, i \neq j}{(|C_i|+|C_j|)}-k\right)=\\
                =\frac{1}{10}\left(\frac{1}{2}\sum_{1 \leq i \leq 5, 1 \leq j \leq 5, i \neq j}{|C_i|}+\frac{1}{2}\sum_{1 \leq i \leq 5, 1 \leq j \leq 5, i \neq j}{|C_j|}-k\right)=\\
                =\frac{1}{10}\left(\frac{1}{2} \cdot 4 \sum_{1 \leq i \leq 5}{|C_i|}+\frac{1}{2} \cdot 4 \sum_{1 \leq j \leq 5}{|C_j|}-k\right)=\\
                =\frac{1}{10}\left(\frac{1}{2} \cdot 4n+\frac{1}{2} \cdot 4n-k\right)=\frac{1}{10}(4n-k)=\frac{2}{5}n-\frac{k}{10},
            \end{multline*}
            as claimed.
        \end{proof}

        Notice that the proof of Theorem~\ref{theorem:lower_bound_for_multigraphs_2n/5-k/10} uses the fact that the acyclic $5$-coloring is proper (we need it to establish that there are no parallel edges inside any single color class), while the proof of Corollary~\ref{corollary:lower_bound_for_multigraphs_2n/5-3k/5} does not use that fact. So, Theorem~\ref{theorem:lower_bound_for_multigraphs_2n/5-k/10} uses a stronger condition on the coloring than Corollary~\ref{corollary:lower_bound_for_multigraphs_2n/5-3k/5} and results in a stronger lower bound on $a(M)$.

        The lower bounds from Theorem~\ref{theorem:lower_bound_for_multigraphs_2n/5-k/10} and Corollary~\ref{corollary:ab_conjecture_implies_lower_bound_for_multigraphs_n/2-k/2} are better than the general lower bound from Theorem~\ref{theorem:planar_multigraphs_a_geq_n/4_tight} when $k$ is small. More precisely, the lower bound from Theorem~\ref{theorem:lower_bound_for_multigraphs_2n/5-k/10} is better than the general lower bound from Theorem~\ref{theorem:planar_multigraphs_a_geq_n/4_tight} when $k<\frac{3}{2}n$; and the lower bound from Corollary~\ref{corollary:ab_conjecture_implies_lower_bound_for_multigraphs_n/2-k/2} is better than the general lower bound from Theorem~\ref{theorem:planar_multigraphs_a_geq_n/4_tight} when $k<\frac{n}{2}$.
        
        Also, notice that, when comparing the lower bounds from Theorem~\ref{theorem:lower_bound_for_multigraphs_2n/5-k/10} and Corollary~\ref{corollary:ab_conjecture_implies_lower_bound_for_multigraphs_n/2-k/2} with each other, the lower bound from Theorem~\ref{theorem:lower_bound_for_multigraphs_2n/5-k/10} has a smaller coefficient of $n$ but at the same time a smaller absolute value of the negative coefficient of $k$, so, under different relationships between $n$ and $k$, either of these two lower bounds can be better than the other. Specifically, when $k<\frac{n}{4}$, the lower bound from Corollary~\ref{corollary:ab_conjecture_implies_lower_bound_for_multigraphs_n/2-k/2} is better; when $k>\frac{n}{4}$, the lower bound from Theorem~\ref{theorem:lower_bound_for_multigraphs_2n/5-k/10} is better; and, when $k=\frac{n}{4}$, they are equal.

    \section{The variant without $2$-faces}\label{section:without_2-faces}
        \subsection{Preliminaries}
            Since the original problem about $a(M)$ for planar multigraphs is too easily reduced to the problem about the independence number for simple planar graphs, it is natural to seek to modify the problem slightly by introducing an additional restriction that would make the problem less trivial. We consider such a restriction here by prohibiting $2$-faces. Specifically, we consider the problem of bounding $a(M)$ from below in terms of $n$ for planar multigraphs on $n$ vertices that admit an embedding into the plane without $2$-faces. Notice that parallel edges are still permitted, but they are prohibited from forming a $2$-face.

            First, let us clarify what exactly we mean by a \emph{$2$-face} (also known as a \emph{digon}).

            \begin{definition}[{\cite[p.~609]{verdiere_computational_topology_of_graphs_on_surfaces}}]\label{definition:degree_of_face}
                We consider each edge as having two sides, and we consider incidences between a side of an edge and a face. The number of such incidences that a given face $f$ has is said to be the \emph{degree} (also known as the \emph{size}) of that face and is denoted by $\deg(f)$. A face of degree $d$ is also called a \emph{$d$-face}.
            \end{definition}

            In particular, if an edge is incident to the same face on both sides, then this edge contributes $2$ to the number of incidences in the degree of that face.

            A $2$-face has exactly $2$ incidences with sides of edges, and notice that, if the graph contains at least $2$ edges, these could be only from two parallel edges (a degenerate case is when the graph contains a single edge and hence a single face, which has degree $2$). Notice that it cannot contain any other edges in its closure, even dangling inside the face from one of the vertices of the boundary or completely inside the face disconnected from the boundary and from the rest of the graph. The existence of any such edges in the closure of the face between two parallel edges would increase its degree to be larger than $2$. A $2$-face can contain in its closure only two parallel edges and their two endpoint vertices, an arbitrary number of additional isolated vertices, and nothing else.

            \begin{lemma}[{\cite[Theorem~7.5.2, p.~326]{gross_yellen_anderson_book_graph_theory}}]\label{lemma:planar_multigraph_2m_equal_sum_of_deg_of_faces}
                In any plane multigraph $\mathcal{M}$ with $m$ edges, we have $2m=\sum_f{\deg(f)}$, where the sum is taken over all faces $f$ of $\mathcal{M}$.
            \end{lemma}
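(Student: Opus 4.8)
The plan is to prove the identity by double counting the incidences between sides of edges and faces, that is, exactly the incidences used in Definition~\ref{definition:degree_of_face}. Let $I$ denote the set of all such incidences in the plane multigraph $\mathcal{M}$; concretely, $I$ consists of all pairs $(s,f)$ where $s$ is a side of an edge, $f$ is a face, and $s$ is incident to $f$. I would compute $|I|$ in two different ways and equate the results.

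First, I would count $|I|$ by grouping the incidences according to the face involved. By Definition~\ref{definition:degree_of_face}, the number of incidences involving a fixed face $f$ is precisely $\deg(f)$, since $\deg(f)$ is defined as the number of incidences between sides of edges and $f$. Summing over all faces $f$ of $\mathcal{M}$ therefore gives $|I|=\sum_f \deg(f)$, which is the right-hand side of the desired identity.

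Second, I would count $|I|$ by grouping the incidences according to the edge involved. Each edge has exactly two sides, so it suffices to show that each individual side of each edge is incident to exactly one face. This is where the only genuinely topological point lies: I would appeal to the definition of a face as a connected component of the complement of the embedded graph in the plane (or sphere). Taking a small neighborhood of the relative interior of an edge and deleting the edge splits the neighborhood into two local components, one on each side; each local component lies in exactly one face, so each side is incident to exactly one face. Consequently every edge contributes exactly $2$ to $|I|$, and summing over all $m$ edges gives $|I|=2m$.

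Equating the two counts yields $2m=\sum_f \deg(f)$, as claimed. The subtlety to handle carefully is the degenerate situation in which both sides of an edge are incident to the same face (for instance, a bridge, or the single edge of a one-edge multigraph whose unique face has degree $2$): in that case the two sides still count as two separate incidences with that face, consistent with the convention stated immediately after Definition~\ref{definition:degree_of_face}, so the total $2m$ remains correct. I expect this verification that each side lies in exactly one face, rather than the bookkeeping, to be the main (and only) obstacle, and it is precisely the content that is usually packaged into the cited standard result.
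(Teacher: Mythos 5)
Your proposal is correct and follows essentially the same double-counting argument as the paper: incidences between sides of edges and faces are counted once by faces (giving $\sum_f \deg(f)$) and once by edges (giving $2m$), with the same treatment of the degenerate case where both sides of an edge meet the same face. The only difference is that you make the topological point that each side lies in exactly one face explicit, which the paper leaves implicit.
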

            \begin{proof}
                As in Definition~\ref{definition:degree_of_face}, we consider each edge as having $2$ sides, and we count incidences between a side of an edge and a face in two ways. On the one hand, each edge contributes to that count exactly $2$ incidences on the two sides of the edge (it could be the same face on both sides, but we still count them as $2$ different incidences because they are on the different sides of the edge). On the other hand, each face $f$ contributes to that count exactly its degree $\deg(f)$. Therefore, the number of incidences, on the one hand, is equal to $2m$, and, on the other hand, is equal to $\sum_f{\deg(f)}$. Hence, we have $2m=\sum_f{\deg(f)}$.
            \end{proof}
            
            The general lower bound $a(M) \geq \frac{n}{4}$ from Theorem~\ref{theorem:planar_multigraphs_a_geq_n/4_tight} gives the lower bound for the variant without $2$-faces as well. But the construction showing the tightness of the general lower bound fails for this variant as it uses $2$-faces (below, we will prove that any embedding into the plane of the multigraph from that construction necessarily has $2$-faces). On the other hand, trivially, planar multigraphs that admit an embedding into the plane without $2$-faces include all simple planar graphs. In particular, we can consider a construction of the disjoint union of multiple copies of $K_4$ (without any parallel edges), which has $a(M)=\frac{n}{2}$ (it is the same construction that shows the tightness of the AB conjecture for simple planar graphs). This shows that the lower bound on $a(M)$ for planar multigraphs that admit an embedding into the plane without $2$-faces cannot exceed $\frac{n}{2}$. So, the best possible lower bound on $a(M)$ for planar multigraphs that admit an embedding into the plane without $2$-faces, trivially, must be somewhere between $\frac{n}{4}$ and $\frac{n}{2}$. Below, we will prove improvements to both ends of this trivial interval.

        \subsection{The weak lower bound}
            \begin{lemma}\label{lemma:planar_multigraph_without_2-faces_m_leq_3n-6}
                In any planar multigraph $M$ on $n \geq 3$ vertices with $m$ edges that admits an embedding into the plane without $2$-faces, we have $m \leq 3n-6$.
            \end{lemma}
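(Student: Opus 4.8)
The plan is to adapt the classical Euler's-formula argument that proves $m \le 3n-6$ for simple planar graphs, with the no-$2$-faces hypothesis playing the role that simplicity usually plays in bounding face degrees from below. Fix an embedding of $M$ into the plane without $2$-faces, let $f$ denote its number of faces, and let $c \ge 1$ denote the number of connected components (counting isolated vertices as components). The case $m=0$ is immediate, since then $m=0 \le 3n-6$ because $n \ge 3$, so I would assume $m \ge 1$ from here on.

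The key step is to show that \emph{every} face has degree at least $3$. Since $m \ge 1$, no face can have degree $0$, because a degree-$0$ face has no edge-side on its boundary, which forces $m=0$. A face of degree $1$ would require its boundary walk to traverse a single edge-side and close up, which is possible only for a loop; since loops are prohibited, there are no faces of degree $1$. Finally, a face of degree $2$ is, by Definition~\ref{definition:degree_of_face}, exactly a $2$-face, and these are excluded by hypothesis (the preliminary discussion already records that such a face consists of two parallel edges or is the degenerate single-edge case, so nothing more need be analyzed). Hence $\deg(f) \ge 3$ for every face $f$.

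With this in hand I would combine two standard facts. By Lemma~\ref{lemma:planar_multigraph_2m_equal_sum_of_deg_of_faces} we have $2m = \sum_f \deg(f) \ge 3f$, so $f \le \tfrac{2m}{3}$. By Euler's formula for plane graphs with $c$ components, $n - m + f = 1 + c$, hence $f = m - n + 1 + c$. Substituting this into $2m \ge 3f$ and rearranging gives $2m \ge 3(m-n+1+c)$, that is, $m \le 3n - 3 - 3c$; since $c \ge 1$, this yields $m \le 3n-6$, as claimed. (Alternatively one could reduce to the connected case first, as $c \ge 1$ only strengthens the conclusion.)

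The only genuinely delicate point is the face-degree lower bound $\deg(f)\ge 3$: one must carefully rule out faces of degree $0$, $1$, and $2$, and it is precisely here that all three hypotheses are consumed, namely the absence of loops, the absence of $2$-faces, and $m \ge 1$. Everything after that is the routine Euler-formula computation above, so I expect the writeup to be short once the face-degree claim is justified cleanly against Definition~\ref{definition:degree_of_face}.
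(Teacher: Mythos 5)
Your proposal is correct and follows essentially the same route as the paper's proof: handle $m=0$ separately, establish that every face has degree at least $3$ by excluding $0$-, $1$-, and $2$-faces, then combine $2m=\sum_f \deg(f) \geq 3\ell$ with Euler's formula $n-m+\ell=1+p$ and $p \geq 1$. The only cosmetic difference is notation ($f$, $c$ versus $\ell$, $p$).
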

            \begin{proof}
                Let $M$ have $p$ connected components. Let the embedding into the plane $\mathcal{M}$ of $M$ have $\ell$ faces. If $m=0$, then the claimed inequality $m \leq 3n-6$ holds because of the assumption $n \geq 3$. So, in what follows, we assume that $m \geq 1$. Then there are no $0$-faces (which are possible only if $M$ is edgeless and then there is a single face in $\mathcal{M}$, which has degree $0$). Also, $1$-faces are not possible at all. And, since $\mathcal{M}$ has no $2$-faces, the degree of each face is at least $3$. By Lemma~\ref{lemma:planar_multigraph_2m_equal_sum_of_deg_of_faces}, we have $2m=\sum_f{\deg(f)}$. Therefore, we have $2m=\sum_f{\deg(f)} \geq \sum_f{3} = 3\ell$. Putting this inequality $2m \geq 3\ell$ into Euler's formula $n-m+\ell=1+p$, which holds for both simple planar graphs and for planar multigraphs~\cite[Theorem~7.5.7, p.~328]{gross_yellen_anderson_book_graph_theory} (in~\cite[Theorem~7.5.7, p.~328]{gross_yellen_anderson_book_graph_theory}, Euler's formula is proved as $n-m+\ell=2$ for connected plane multigraphs, but it can be easily extended to $n-m+\ell=1+p$ for any, not necessarily connected, plane multigraphs using induction on $p$), we get $1+p=n-m+\ell \leq n-m+\frac{2}{3}m$, which simplifies to $m \leq 3n-3-3p$. Since $p \geq 1$, we have $m \leq 3n-3-3p \leq 3n-6$, as claimed.
            \end{proof}

            The same inequality $m \leq 3n-6$ as in Lemma~\ref{lemma:planar_multigraph_without_2-faces_m_leq_3n-6} is well-known to hold for simple planar graphs~\cite[Corollary~4.2.10, p.~102]{diestel_book_graph_theory}. The key fact is that each face has degree at least $3$, which is true both for simple plane graphs and for plane multigraphs without $2$-faces (for $n \geq 3$), but is not true for plane multigraphs with $2$-faces. The existence of $2$-faces allows plane multigraphs to have an unlimited number of edges (unbounded in terms of the number of vertices), which can be seen by an example of a multigraph on $2$ vertices with an arbitrary number of parallel edges between them.

            The construction $M_k$ of a disjoint union of $k$ copies of $K_4$ with all edges duplicated on which the lower bound $a(M) \geq \frac{n}{4}$ from Theorem~\ref{theorem:planar_multigraphs_a_geq_n/4_tight} is attained has $n=4k$ vertices and $m=12k$ edges. So, the inequality $m \leq 3n-6$ from Lemma~\ref{lemma:planar_multigraph_without_2-faces_m_leq_3n-6} can be rewritten as $12k \leq 12k-6$ for that multigraph $M_k$, and we see that it does not hold. This implies that that multigraph $M_k$ does not admit an embedding into the plane without $2$-faces.

            Lemma~\ref{lemma:planar_multigraph_without_2-faces_m_leq_3n-6} in combination with Theorem~\ref{theorem:lower_bound_for_multigraphs_2n/5-k/10} can be used to prove a lower bound on $a(M)$ for the variant without $2$-faces that is better than the general lower bound from Theorem~\ref{theorem:planar_multigraphs_a_geq_n/4_tight}, but only very marginally: by a small additive constant.

            \begin{theorem}\label{theorem:without_2-faces_lower_bound_n/4+3/10}
                For every planar multigraph $M$ on $n \geq 1$ vertices admitting an embedding into the plane without $2$-faces, we have $a(M) \geq \frac{n}{4}+\frac{3}{10}$.
            \end{theorem}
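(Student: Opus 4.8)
The plan is to combine the bound $a(M) \geq \frac{2}{5}n - \frac{k}{10}$ from Theorem~\ref{theorem:lower_bound_for_multigraphs_2n/5-k/10}, where $k$ is the number of pairs of vertices joined by parallel edges, with the edge bound $m \leq 3n-6$ from Lemma~\ref{lemma:planar_multigraph_without_2-faces_m_leq_3n-6}. The point is that the bound from Theorem~\ref{theorem:lower_bound_for_multigraphs_2n/5-k/10} is strong exactly when $k$ is small, and the absence of $2$-faces forces $k$ to be bounded in terms of $n$. So the strategy is to use the edge bound to control $k$ and then substitute.

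First I would treat the main case $n \geq 3$, where Lemma~\ref{lemma:planar_multigraph_without_2-faces_m_leq_3n-6} applies. The key observation is that each of the $k$ pairs of vertices with parallel edges contributes at least $2$ edges, and since these pairs are distinct their edge sets are pairwise disjoint, so $m \geq 2k$. Combining this with $m \leq 3n-6$ yields $2k \leq 3n-6$, that is, $k \leq \frac{3}{2}n - 3$. Substituting into Theorem~\ref{theorem:lower_bound_for_multigraphs_2n/5-k/10} gives
\[
a(M) \geq \frac{2}{5}n - \frac{k}{10} \geq \frac{2}{5}n - \frac{1}{10}\left(\frac{3}{2}n - 3\right) = \frac{n}{4} + \frac{3}{10},
\]
which is exactly the claimed bound. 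The additive constant $\frac{3}{10}$ arises directly from the ``$-6$'' in $m \leq 3n-6$ (it becomes a $+3$ after dividing by $2$, and then $\frac{3}{10}$ after the factor $\frac{1}{10}$), so the marginal improvement over $\frac{n}{4}$ is inherited from the surplus in the edge count.

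Finally I would dispose of the small cases $n=1$ and $n=2$, which fall outside the range $n \geq 3$ of Lemma~\ref{lemma:planar_multigraph_without_2-faces_m_leq_3n-6}. For $n=1$ there are no edges (loops are forbidden), so $a(M)=1 \geq \frac{1}{4}+\frac{3}{10}$. For $n=2$ a single edge between the two vertices already creates a $2$-face, as does any pair of parallel edges, so the no-$2$-face hypothesis forces $M$ to be edgeless; hence $a(M)=2 \geq \frac{1}{2}+\frac{3}{10}$. There is no genuine obstacle here, as all ingredients are in place; the only points requiring care are the inequality $m \geq 2k$, which relies on the disjointness of the parallel-edge classes, and the separate handling of $n \leq 2$, since the edge bound is only available for $n \geq 3$.
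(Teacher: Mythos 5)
Your proof is correct and takes essentially the same route as the paper: combine $2k \leq m$ with $m \leq 3n-6$ from Lemma~\ref{lemma:planar_multigraph_without_2-faces_m_leq_3n-6} to get $k \leq \frac{3n-6}{2}$, then substitute into Theorem~\ref{theorem:lower_bound_for_multigraphs_2n/5-k/10}. The only cosmetic difference is in the cases $n \leq 2$, where the paper simply notes that a single vertex gives $a(M) \geq 1 \geq \frac{n}{4}+\frac{3}{10}$, whereas you argue (correctly, but unnecessarily) that $M$ must be edgeless.
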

            \begin{proof}
                If $n=1$ or $n=2$, then we can take any single vertex as an induced forest, so $a(M) \geq 1$, and the claimed lower bound holds. In what follows, we assume that $n \geq 3$. Denote by $m$ the number of edges in $M$ and by $k$ the number of pairs of vertices in $M$ with parallel edges between them. Since every pair of vertices with parallel edges between them has at least $2$ edges between them, we have $2k \leq m$. By Lemma~\ref{lemma:planar_multigraph_without_2-faces_m_leq_3n-6}, we have $m \leq 3n-6$. Combining these two inequalities, we get $2k \leq m \leq 3n-6$, which implies that $k \leq \frac{3n-6}{2}$. Now, using Theorem~\ref{theorem:lower_bound_for_multigraphs_2n/5-k/10}, we have $a(M) \geq \frac{2}{5}n-\frac{k}{10} \geq \frac{2}{5}n-\frac{1}{10} \cdot \frac{3n-6}{2}=\frac{n}{4}+\frac{3}{10}$, as claimed.
            \end{proof}

        \subsection{The main lower bound}
            We prove the lower bound $a(M) \geq rn+c$ for every planar multigraph $M$ that admits an embedding into the plane without $2$-faces, where $r>0$ and $c$ are constant (not depending on $n$) parameters. The values of the parameters $r$ and $c$ will be specified in the final part of the proof, but, for now, we keep them as parameters.
                    
            The proof is by contradiction. Suppose that the statement is not true. We consider an edge-minimal counterexample, that is, a counterexample with the smallest number of edges (if there are multiple such edge-minimal counterexamples, then we take any of them arbitrarily).
                    
            Notice that taking an edge-\emph{minimal} counterexample is the opposite of the standard technique of taking a vertex-minimal counterexample and then an edge-\emph{maximal} counterexample among all vertex-minimal ones, which, for simple planar graphs, would be a triangulation. We do this on purpose. First, we do not need even the vertex-minimality of a counterexample, although we could assume it. Second, we need specifically an edge-\emph{minimal} counterexample, not an edge-\emph{maximal} one, to eliminate the possibility of parallel edges with multiplicity greater than $2$ in Lemma~\ref{lemma:without_2-faces_min_counterexample_parallel_edges_multiplicity_geq_3_reducible} below. That lemma is the only place where we use the edge-minimality.
                
            In fact, there is an alternative way of dealing with parallel edges with multiplicity greater than $2$ without assuming the edge-minimality and without Lemma~\ref{lemma:without_2-faces_min_counterexample_parallel_edges_multiplicity_geq_3_reducible}. We will elaborate on that alternative way below.

            Notice that the property of not having $2$-faces is not hereditary. So, a subgraph of a plane multigraph without $2$-faces may or may not have $2$-faces. In particular, removing an arbitrary set of vertices from a plane multigraph without $2$-faces might result in a plane multigraph with $2$-faces. Also, contracting a connected subgraph into a single vertex might also result in a plane multigraph with $2$-faces. This makes standard reduction arguments problematic.
                    
            \begin{lemma}\label{lemma:without_2-faces_min_counterexample_parallel_edges_multiplicity_geq_3_reducible}
                For any $r$ and $c$, all parallel edges in an edge-minimal counterexample $M$ have multiplicity exactly $2$.
            \end{lemma}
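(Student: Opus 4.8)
The plan is to argue by contradiction against the edge-minimality of $M$: I will show that a pair of vertices of multiplicity at least $3$ would let me delete one parallel edge and produce a strictly smaller counterexample. So suppose, for contradiction, that $u$ and $v$ are joined by $\mu \geq 3$ parallel edges in $M$, fix a plane embedding $\mathcal{M}$ of $M$ without $2$-faces, and let $e$ be one of these $uv$-edges. Let $M'$ be the multigraph obtained by deleting $e$, embedded as $\mathcal{M}' = \mathcal{M} - e$. Then $M'$ has the same $n$ vertices and exactly one fewer edge, and $u$ and $v$ remain joined by $\mu - 1 \geq 2$ parallel edges. It remains to verify that $M'$ is again a counterexample, for which I must check two things: that $\mathcal{M}'$ has no $2$-faces, and that $a(M') = a(M)$.

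For the first point, deleting $e$ changes only the faces of $\mathcal{M}$ incident to $e$. Since $u$ and $v$ stay joined by at least one other edge, $e$ is not a bridge, so its two sides bear two \emph{distinct} faces $f_1$ and $f_2$ of $\mathcal{M}$, and upon deletion these merge into a single face $f$ of $\mathcal{M}'$. Exactly as in the proof of Lemma~\ref{lemma:planar_multigraph_without_2-faces_m_leq_3n-6}, the absence of loops and of $2$-faces forces every face incident to an edge of $\mathcal{M}$ to have degree at least $3$ in the sense of Definition~\ref{definition:degree_of_face}. Counting side-incidences as in Lemma~\ref{lemma:planar_multigraph_2m_equal_sum_of_deg_of_faces}, only the two side-incidences of $e$ disappear, so the merged face satisfies $\deg(f) = \deg(f_1) + \deg(f_2) - 2 \geq 3 + 3 - 2 = 4$. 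Every other face of $\mathcal{M}'$ coincides with a face of $\mathcal{M}$ and so still has degree at least $3$. Hence $\mathcal{M}'$ has no $2$-faces, so $M'$ admits an embedding into the plane without $2$-faces.

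For the second point, since $M'$ arises from $M$ by deleting an edge and deleting edges cannot create cycles, any vertex set inducing a forest in $M$ induces one in $M'$, giving $a(M') \geq a(M)$. Conversely, let $S$ induce a forest in $M'$. If $\{u,v\} \subseteq S$, then, because $M'$ still contains at least two parallel $uv$-edges, $M'[S]$ would contain a $2$-cycle, contradicting that it is a forest; hence $\{u,v\} \not\subseteq S$, and then $M[S] = M'[S]$ is a forest, so $a(M) \geq a(M')$. Thus $a(M') = a(M)$. This is where the hypothesis $\mu \geq 3$ is essential: it guarantees $\mu - 1 \geq 2$, so the deletion preserves a $2$-cycle on $\{u,v\}$ and leaves $a$ unchanged; for $\mu = 2$ the deletion would instead change which sets induce forests.

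Combining the two points, $M'$ is a planar multigraph on the same number $n$ of vertices admitting an embedding without $2$-faces, with $a(M') = a(M) < rn + c$, and with strictly fewer edges than $M$ — contradicting the edge-minimality of $M$. Therefore no pair of vertices of $M$ has multiplicity at least $3$, i.e.\ all parallel edges have multiplicity exactly $2$. I expect the face-merging step to be the only delicate point, namely confirming that the two faces incident to the deleted edge are distinct (so the degree identity applies) and that each has degree at least $3$; the invariance of $a$ and the final contradiction are then routine.
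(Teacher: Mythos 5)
Your proof is correct and follows the same top-level strategy as the paper: delete one of the $\mu \ge 3$ parallel $uv$-edges and show that the result is a counterexample with strictly fewer edges, contradicting edge-minimality. The two sub-claims are verified differently, though. For the absence of $2$-faces, the paper labels the regions $R_1,\dots,R_b$ between consecutive parallel edges, observes that none is edge-free, and runs a case analysis showing that neither side of any preexisting $2$-cycle can become edge-free after the deletion; you instead note that the deleted edge is not a bridge (it lies on a $2$-cycle with another parallel edge), so its two sides carry distinct faces, each of degree at least $3$, which merge into a single face of degree $\deg(f_1)+\deg(f_2)-2\ge 4$, all other faces being untouched. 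Your version is shorter and more quantitative, at the cost of invoking the standard fact that a non-bridge edge of a plane multigraph has distinct faces on its two sides, whereas the paper's region-tracking argument is more self-contained topologically. For the invariance of $a$, the paper reroutes every cycle of $M$ through a surviving parallel edge to conclude that the forest-inducing vertex sets of $M$ and of the reduced multigraph coincide; you reach the same conclusion more directly by observing that a forest-inducing set of $M'$ cannot contain both $u$ and $v$ (they still span a $2$-cycle in $M'$), so the induced subgraphs in $M$ and $M'$ agree. Both routes are sound, you correctly isolate where $\mu\ge 3$ is essential, and your write-up of the two verifications is arguably the cleaner one.
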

            \begin{proof}
                Suppose, to the contrary, that $M$ contains two vertices $u$ and $v$ with $b$ parallel edges between $u$ and $v$, where $b \geq 3$. Denote these parallel edges by $e_1$, \ldots, $e_b$ in the clockwise order that they go out of $u$ and come into $v$ in the embedding $\mathcal{M}$ of $M$ into the plane without $2$-faces. Then these edges divide the plane into the regions $R_1$, \ldots, $R_b$ such that $R_i$ is the region between $e_i$ and $e_{i+1}$ moving clockwise, where $e_{b+1}=e_1$ (circular indexing). Since $\mathcal{M}$ does not contain $2$-faces, none of the regions $R_i$ are edge-free.
                            
                We remove the edge $e_2$ and denote the resulting plane multigraph by $\mathcal{N}$ and the underlying planar multigraph by $N$. This edge removal keeps the number of vertices, which we denote by $n$, decreases the number of edges by $1$, merges the regions $R_1$ and $R_2$ into a single region, and keeps the regions $R_3$, \ldots, $R_b$ unchanged.
                            
                Clearly, this edge removal does not create new $2$-cycles. Since any $2$-face is a $2$-cycle, a $2$-face could only appear in $\mathcal{N}$ if the interior or the exterior of a preexisting $2$-cycle became edge-free. First, consider the case of a $2$-cycle formed by the remaining parallel edges between $u$ and $v$. Both the interior and the exterior of such a $2$-cycle are not edge-free because each of them contains at least one of the regions $R_i$. The only remaining case is a $2$-cycle lying in the closure of one of the regions $R_i$ (one of the vertices of that $2$-cycle, but not both of them, might coincide with $u$ or $v$). That $2$-cycle divides the plane into two regions: the interior and the exterior. The one of these two regions that lies in the closure of $R_i$ remains unchanged, so it cannot be edge-free since $\mathcal{M}$ did not contain $2$-faces. And the other of these two regions contains all edges in all other regions $R_j$, $j \neq i$, so it also cannot be edge-free. So, in all cases, the interior and exterior of a preexisting $2$-cycle cannot become edge-free, and hence $\mathcal{N}$ does not contain $2$-faces.

                Any (simple) cycle in $M$ either is a $2$-cycle between $u$ and $v$ or contains at most $1$ edge from the parallel edges $e_i$ between $u$ and $v$. In both cases, it can be rerouted to avoid the removed edge $e_2$ by replacing $e_2$ with one of the other edges $e_i$, $i \neq 2$, while keeping the same vertices of the cycle. Therefore, a set of vertices induces a forest in $M$ if and only if it induces a forest in $N$. Therefore, we have $a(N)=a(M)$.

                By the edge-minimality of the counterexample $M$, we have that $N$ is not a counterexample, that is, $a(N) \geq rn+c$. Combining this with the equality $a(N)=a(M)$, we get that $a(M)=a(N) \geq rn+c$, a contradiction with the assumption that $M$ is a counterexample.
            \end{proof}

            \begin{lemma}\label{lemma:rooted_directed_forest_weight_lower_bound}
                Let $F$ be a rooted directed forest on $n \geq 1$ vertices. Assume that each vertex of $F$ is assigned a weight such that each leaf has weight $2$, each unary vertex (a vertex of out-degree exactly $1$) has weight $1$, and each branching vertex (a vertex of out-degree at least $2$) has weight $0$ or $1$. Then the total weight of $F$ (the sum of the weights of all vertices of $F$) is at least $n+1$. That lower bound is tight.
            \end{lemma}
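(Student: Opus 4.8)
The plan is to reduce the statement to a purely combinatorial inequality about the numbers of leaves and branching vertices, and then to establish that inequality by a two-way edge count. Since each branching vertex is allowed to carry weight either $0$ or $1$, the total weight is minimized when every branching vertex is assigned weight $0$; hence it suffices to prove the bound in that worst case. Write $L$, $U$, and $B$ for the numbers of leaves (out-degree $0$), unary vertices (out-degree exactly $1$), and branching vertices (out-degree at least $2$) in $F$, so that $n=L+U+B$. In the worst case the total weight equals $2L+U$, and since $2L+U=n+(L-B)$, the desired bound $2L+U\geq n+1$ is equivalent to the clean statement $L\geq B+1$, that is, a rooted directed forest has at least one more leaf than it has branching vertices.

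To prove $L\geq B+1$, I would count the edges of $F$ in two ways. If $F$ has $t$ rooted trees (connected components), then, being a forest, it has exactly $n-t$ edges; on the other hand the number of edges equals $\sum_v d^+(v)=U+\sum_{b}d^+(b)$, where $d^+(\cdot)$ denotes out-degree and the last sum is over the branching vertices $b$. Subtracting the edge count $n-t=U+\sum_b d^+(b)$ from the vertex count $n=L+U+B$ gives $\sum_b d^+(b)=L+B-t$. Because every branching vertex has out-degree at least $2$, we have $\sum_b d^+(b)\geq 2B$, so $L+B-t\geq 2B$, which rearranges to $L\geq B+t$. Since $t\geq 1$, this yields $L\geq B+1$, and substituting back gives total weight at least $2L+U=n+(L-B)\geq n+t\geq n+1$, as claimed. (The argument in fact gives the slightly stronger bound $n+t$.)

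For tightness I would exhibit the directed path on $n$ vertices, rooted at one end. Its unique leaf has weight $2$ and each of its $n-1$ unary vertices has weight $1$, so the total weight is exactly $2+(n-1)=n+1$; the single-vertex forest (whose only vertex is a leaf of weight $2=n+1$) is the $n=1$ instance. This matches the lower bound and shows it cannot be improved.

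There is no serious obstacle here; the proof is a short counting argument. The only points requiring care are the bookkeeping in the two-way edge count and the passage from a single tree to a forest, where one must keep track of the number of components $t$ so as not to lose the additive constant, and the initial reduction observing that assigning weight $0$ to every branching vertex is indeed the correct worst case.
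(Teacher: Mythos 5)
Your proof is correct and takes essentially the same approach as the paper: both arguments count the arcs of $F$ in two ways (the forest on $n$ vertices with $t$ components has $n-t$ arcs, while the out-degree sum is at least $u+2b$) and conclude that the total weight is at least $n+t\geq n+1$, with the directed path witnessing tightness. Your intermediate reformulation as $L\geq B+t$ is just a repackaging of the same inequality.
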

            \begin{proof}
                Denote by $\ell$ the number of leaves in $F$. Denote by $u$ the number of unary vertices in $F$. Denote by $b$ the number of branching vertices. Denote by $t$ the number of connected components in $F$. Obviously, we have $\ell+u+b=n$ and $t \geq 1$.
                    
                We can count the number of arcs in $F$ in two different ways: as the sum of in-degrees of all vertices and as the sum of out-degrees of all vertices. So, these two sums must be equal to each other. Since every vertex of $F$ except the roots of the connected components has exactly $1$ incoming arc, the sum of in-degrees is equal to $n-t$. On the other hand, the sum of out-degrees is at least $u+2b$ because leaves have no outgoing arcs, every unary vertex has exactly $1$ outgoing arc, and every branching vertex has at least $2$ outgoing arcs. So, we have $n-t \geq u+2b$.

                The total weight of $F$ can be calculated as at least $\ell \cdot 2+u \cdot 1+b \cdot 0=2\ell+u=2(n-u-b)+u=2n-(u+2b) \geq 2n-(n-t)=n+t \geq n+1$.
                    
                To show the tightness of the lower bound, we consider the construction of $F$ consisting of a single directed path on $n$ vertices. It has a single leaf and $n-1$ unary vertices. So, the total weight is equal to $1 \cdot 2+(n-1) \cdot 1=n+1$, which means that the lower bound is attained on that construction.
            \end{proof}

            \begin{theorem}\label{theorem:without_2-faces_lower_bound_3n/10+7/30}
                For every planar multigraph $M$ on $n \geq 1$ vertices admitting an embedding into the plane without $2$-faces, we have $a(M) \geq \frac{3}{10}n+\frac{7}{30}$.
            \end{theorem}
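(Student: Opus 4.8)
The plan is to run the contradiction argument already set up, with parameters $r=\frac{3}{10}$ and $c=\frac{7}{30}$. So I take an edge-minimal counterexample $M$ on $n$ vertices with a plane embedding $\mathcal{M}$ having no $2$-faces, and immediately apply Lemma~\ref{lemma:without_2-faces_min_counterexample_parallel_edges_multiplicity_geq_3_reducible} to assume that every bundle of parallel edges has multiplicity exactly $2$, so that the parallel edges are genuine digons. The global facts I would rely on are the planar edge bound $m\le 3n-6$ from Lemma~\ref{lemma:planar_multigraph_without_2-faces_m_leq_3n-6} and the forest-weight inequality of Lemma~\ref{lemma:rooted_directed_forest_weight_lower_bound}; these are exactly what should let the argument beat both the trivial bound $\frac{n}{4}$ and the weak bound $\frac{n}{4}+\frac{3}{10}$ of Theorem~\ref{theorem:without_2-faces_lower_bound_n/4+3/10}.

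The core is to build an induced forest of $M$ that is too large for the counterexample to tolerate. Writing $m=m_s+2k$, where $k$ is the number of digons and $m_s$ the number of simple edges, the edge bound gives $m_s+2k\le 3n-6$, which controls $k$. I would deduplicate $M$ to the simple planar graph $G_M$ as in Lemma~\ref{lemma:multigraph_to_simple_graph_a_alpha} and take an acyclic $5$-coloring of $G_M$~\cite{borodin_acyclic_5-coloring}, so that each union $C_i\cup C_j$ of two color classes induces a \emph{forest} in $G_M$. The doubled edges lying inside such a union form a \emph{subforest} of that forest, so an induced forest of $M$ is obtained from $C_i\cup C_j$ by deleting a minimum vertex cover of these doubled edges, and since the doubled edges form a forest, by K\"onig's theorem that cover equals a maximum matching among them. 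The whole improvement over the $\frac{2}{5}n-\frac{k}{10}$ bound of Theorem~\ref{theorem:lower_bound_for_multigraphs_2n/5-k/10} comes from the fact that this matching can be far smaller than the number of doubled pairs it covers. To bound these matchings (equivalently, the number of vertices one must sacrifice), I would root the relevant subforests and apply Lemma~\ref{lemma:rooted_directed_forest_weight_lower_bound}: the leaf/unary/branching weights $2,1,0\text{--}1$ are tuned so that the resulting weight count both lower-bounds the retained vertices and, through the $+1$ per component, produces the additive term. Averaging over the color-class pairs and feeding in $k\le\frac{3n-6-m_s}{2}$ should deliver $a(M)\ge\frac{3}{10}n+\frac{7}{30}$, contradicting that $M$ is a counterexample.

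I expect the main obstacle to be the tight control of how many vertices must be deleted to destroy all digons. The naive estimate --- one deletion per digon --- only reproduces the weak bound $\frac{n}{4}+\frac{3}{10}$, so the argument must genuinely exploit that the digons sit inside a forest and that their minimum vertex cover is therefore a matching; this is precisely the place where Lemma~\ref{lemma:rooted_directed_forest_weight_lower_bound} has to do the heavy lifting, converting the branching structure of the rooted subforest into the factor needed to push the coefficient from $\frac14$ up to $\frac{3}{10}$. A secondary difficulty is structural: since the no-$2$-face property is not hereditary, no vertex-deletion reduction is available and everything must be phrased through the digon structure guaranteed by Lemma~\ref{lemma:without_2-faces_min_counterexample_parallel_edges_multiplicity_geq_3_reducible}. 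Finally, landing the exact constants $\frac{3}{10}$ and $\frac{7}{30}$ rather than something weaker will require carefully balancing the averaging against the weight accounting, which is the most delicate bookkeeping in the proof.
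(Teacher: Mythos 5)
There is a genuine gap, and it sits exactly where you predicted the ``heavy lifting'' would be. Your only quantitative use of the no-$2$-face hypothesis is Lemma~\ref{lemma:planar_multigraph_without_2-faces_m_leq_3n-6}, which yields $k\le\frac{3n-6}{2}$; fed into the acyclic-$5$-coloring averaging, this reproduces only the weak bound of Theorem~\ref{theorem:without_2-faces_lower_bound_n/4+3/10}. The savings you hope to extract from K\"onig's theorem do not exist in the worst case: the digons with both endpoints in $C_i\cup C_j$ may be pairwise vertex-disjoint, in which case their minimum vertex cover equals their number and you delete one vertex per digon after all. Moreover, Lemma~\ref{lemma:rooted_directed_forest_weight_lower_bound} cannot play the role you assign it --- it is a \emph{lower} bound on a weighted vertex sum of a rooted forest, which is the wrong direction for upper-bounding a vertex cover, and in the paper the rooted forest to which it is applied is not a subforest of a colour-class union at all, but the nesting forest of the $2$-cycles of $\mathcal{M}$ under the relation of one being ``immediately inside'' another.

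What is actually needed, and what the paper's discharging argument delivers, is a much stronger bound on the number $k$ of digon pairs: for an edge-minimal counterexample (with all multiplicities equal to $2$ by Lemma~\ref{lemma:without_2-faces_min_counterexample_parallel_edges_multiplicity_geq_3_reducible}) one shows that the total charge $3n-3k-4-3p$ is non-negative, i.e.\ $k\le n-\frac{7}{3}$. This is where the no-$2$-face condition does real work: every $2$-cycle must contain edges in both its interior and its exterior, every leaf of the nesting forest must contain non-parallel edges in its exclusive interior, and a charge assignment of $\deg(f)-3$ to faces, $1$ to non-parallel edges and $-\frac{k+1}{2k}$ to parallel edges, discharged along that nesting forest with Lemma~\ref{lemma:rooted_directed_forest_weight_lower_bound} guaranteeing at least $k+1$ of charge reaching the pot, forces the total to be non-negative. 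Combined with Theorem~\ref{theorem:lower_bound_for_multigraphs_2n/5-k/10}, the inequality $k\le n-\frac{7}{3}$ gives $a(M)\ge\frac{2}{5}n-\frac{1}{10}\left(n-\frac{7}{3}\right)=\frac{3}{10}n+\frac{7}{30}$, which is the desired contradiction. Your outline contains no mechanism for proving any bound of the form $k\le n+O(1)$, so it cannot reach the stated constants.
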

            \begin{proof}
                If $n=1$ or $n=2$, then we can take any single vertex as an induced forest, so $a(M) \geq 1$, and the claimed lower bound holds. In what follows, we assume that $n \geq 3$.

                Denote by $m$ the number of edges in $M$. If $m=0$, then $M$ is edgeless and hence a forest, so we have $a(M) \geq n$, and the claimed lower bound holds. So, in what follows, we assume that $m \geq 1$. Then there are no $0$-faces (which are possible only if $M$ is edgeless and then there is a single face in $\mathcal{M}$, which has degree $0$). Also, $1$-faces are not possible at all. And, since $\mathcal{M}$ has no $2$-faces, the degree of each face is at least $3$.

                We call an edge \emph{parallel} if there is another edge parallel to it. We call an edge \emph{non-parallel} if there are no other edges parallel to it.

                Suppose that the statement is not true and let $M$ be an edge-minimal counterexample for $r=\frac{3}{10}$ and $c=\frac{7}{30}$. Fix an embedding $\mathcal{M}$ into the plane of $M$ without $2$-faces. Denote by $\ell$ the number of faces in $\mathcal{M}$. Denote by $p$ the number of connected components in $M$. Denote by $k$ the number of pairs of vertices in $M$ with parallel edges between them. By Lemma~\ref{lemma:without_2-faces_min_counterexample_parallel_edges_multiplicity_geq_3_reducible}, for each of the $k$ pairs of vertices with parallel edges between them, there are exactly $2$ parallel edges between them and they form a single $2$-cycle. In total, the number of $2$-cycles in $M$ is exactly $k$. Therefore, the number of parallel edges is exactly $2k$ and the number of non-parallel edges is exactly $m-2k$.

                By Theorem~\ref{theorem:lower_bound_for_multigraphs_2n/5-k/10}, we have $a(M) \geq \frac{2}{5}n-\frac{k}{10}$. Combining this with the inequality $a(M)<\frac{3}{10}n+\frac{7}{30}$ (because $M$ is a counterexample), we get $\frac{3}{10}n+\frac{7}{30}>a(M) \geq \frac{2}{5}n-\frac{k}{10}$, which simplifies to $3n-3k-7<0$.
                    
                In particular, given that $n \geq 3$, that inequality implies that $k>n-\frac{7}{3} \geq 3-\frac{7}{3}=\frac{2}{3}$. In particular, we have $k \neq 0$.

                If an open region $R$ of the plane contains an edge $e$ without its endpoints and the closure of $R$ contains the endpoints of $e$ (so, one or both of the endpoints of $e$ are allowed to be on the boundary of $R$ rather than in $R$ itself), then, for simplicity, we will say that $R$ \emph{contains} $e$, without explicitly specifying every time that $R$ actually contains $e$ only, possibly, without its endpoints.
                
                Each pair of parallel edges forms a $2$-cycle. Each $2$-cycle $C$ divides the plane into two open regions: the interior and the exterior, which we denote by $Int(C)$ and $Ext(C)$, respectively. Denote by $\overline{Int}(C)$ and $\overline{Ext}(C)$ their closures that include the boundary. The condition that there are no $2$-faces implies that, for each $2$-cycle $C$, both $Ext(C)$ and $Int(C)$ contain other edges of the multigraph.

                For a $2$-cycle $C$, define the \emph{exclusive interior} of $C$ as $Int(C) \setminus \left(\cup_{D}{\overline{Int}(D)}\right)$, where the union is over all $2$-cycles $D$ distinct from $C$. It is easy to see that the exclusive interiors of different $2$-cycles do not intersect with each other.

                For two distinct $2$-cycles $C$ and $D$, we say that $D$ is \emph{immediately inside} $C$ if $D$ lies in $\overline{Int}(C)$ and there is no other $2$-cycle $C'$ such that $C'$ lies in $\overline{Int}(C)$ and $D$ lies in $\overline{Int}(C')$. If, in addition, $D$ is the only $2$-cycle that is immediately inside $C$, then we say that $D$ is \emph{exclusively immediately inside} $C$. We say that a $2$-cycle that contains other $2$-cycles immediately inside it is \emph{branching} if it contains at least $2$ other $2$-cycles immediately inside it and \emph{non-branching} if it contains exactly $1$ other $2$-cycle immediately inside it. We say that a $2$-cycle is a leaf if it does not contain any other $2$-cycles immediately inside it.

                We call a $2$-cycle \emph{exclusively non-empty} if its exclusive interior contains edges and \emph{exclusively empty} otherwise. Clearly, a $2$-cycle cannot be exclusively empty and a leaf simultaneously.
                            
                It follows from Lemma~\ref{lemma:without_2-faces_min_counterexample_parallel_edges_multiplicity_geq_3_reducible} that the $2$-cycles do not share edges. Therefore, it is easy to see that the set of the $2$-cycles with the relation of one being immediately inside another is a rooted directed forest that we denote by $F$.
                
                We divide all $2$-cycles into the following exhaustive and mutually exclusive categories:
                \begin{itemize}
                    \item E-B --- exclusively empty branching;
                    \item E-NB --- exclusively empty non-branching;
                    \item NE-NL --- exclusively non-empty non-leaf (could be branching or non-branching);
                    \item NE-L1 --- exclusively non-empty leaf that contains exactly $1$ non-parallel edge in its interior;
                    \item NE-L2 --- exclusively non-empty leaf that contains at least $2$ non-parallel edges in its interior.
                \end{itemize}
                We refer to a $2$-cycle of category $X$, where $X$ is one of the categories above, as an $X$-cycle.

                By definition, every exclusively non-empty $2$-cycle $C$ contains in its exclusive interior at least $1$ edge $e$. By definition of the exclusive interior, that edge $e$ does not belong to the closure of the interior of any of the other $2$-cycles. In particular, $e$ is not an edge of any of the other $2$-cycles. Since $e$ is in the interior of $C$, it cannot be an edge of $C$ itself either. So, $e$ is not an edge of any $2$-cycles, which means that it is a non-parallel edge. Since the exclusive interiors of distinct $2$-cycles do not intersect with each other, we have that these non-parallel edges in the exclusive interiors of distinct exclusively non-empty $2$-cycles are distinct from each other.

                By definition, every E-NB-cycle $C$ has a unique $2$-cycle $D$ exclusively immediately inside it. Therefore, the exclusive interior of $C$ is the region $Int(C) \setminus \overline{Int}(D)$ and it is a $4$-face as the only edges incident to it are the two edges of $C$ and the two edges of $D$. We will refer to this $4$-face as the \emph{$4$-face of the exclusive interior of $C$}.

                As was mentioned previously, every leaf $2$-cycle must be exclusively non-empty, so it must contain at least $1$ non-parallel edge in its exclusive interior. So, NE-L1-cycles and NE-L2-cycles together exhaust all leaf $2$-cycles. Also, the exclusive interior of a leaf $2$-cycle is just its interior. The interior of an NE-L1-cycle $C$ is a $4$-face as the only edges incident to it are the two edges of $C$ and the unique edge in its interior that is incident to that face on both sides. We will refer to this $4$-face as the \emph{$4$-face of the interior of $C$}.

                Since the exclusive interiors of distinct $2$-cycles do not intersect with each other, the $4$-faces of the exclusive interiors of E-NB-cycles and the $4$-faces of the interiors of NE-L1-cycles are all pairwise distinct.

                Now, we use discharging. We assign the following initial charges.
                \begin{itemize}
                    \item Each face $f$ has charge $\deg(f)-3$.

                    \item Each parallel edge has charge $-\frac{k+1}{2k}$.

                    \item Each non-parallel edge has charge $1$.

                    \item The pot is empty (has charge $0$).
                \end{itemize}

                We redistribute charges in two stages. The following are the charge redistribution rules of the first stage.
                \begin{itemize}
                    \item[(R1)] Each E-NB-cycle takes charge $1$ from the $4$-face of its exclusive interior.

                    \item[(R2)] Each NE-NL-cycle takes charge $1$ from one arbitrary non-parallel edge in its exclusive interior.
                        
                    \item[(R3)] Each NE-L1-cycle takes charge $1$ from the unique non-parallel edge in its interior and takes charge $1$ from the $4$-face of its interior.

                    \item[(R4)] Each NE-L2-cycle takes charge $1$ from each of two arbitrary non-parallel edges in its interior.
                \end{itemize}

                The following are the charge redistribution rules of the second stage.
                \begin{itemize}
                    \item[(R5)] All $2$-cycles give all their charges to the pot.

                    \item[(R6)] Each parallel edge takes charge $\frac{k+1}{2k}$ from the pot.
                \end{itemize}

                When we say that a $2$-cycle receives or gives charge, we mean that $2$-cycle as a separate entity, not as the two edges that it consists of.

                While the first stage of the charge redistribution is local (in the sense that the charge is redistributed among nearby entities), the second stage is totally global and is carried out through the central pot.

                Regarding the usage of the values $-\frac{k+1}{2k}$ and $\frac{k+1}{2k}$, notice that we can divide by $k$ because we proved earlier that $k \neq 0$.

                Out of the five exhaustive and mutually exclusive categories of $2$-cycles listed above, only E-B-cycles do not receive charges in the first stage. All $2$-cycles of the other four categories receive charges in the first stage, and these four categories correspond to rules (R1), (R2), (R3), (R4).

                Using Lemma~\ref{lemma:planar_multigraph_2m_equal_sum_of_deg_of_faces}, Euler's formula $n-m+\ell=1+p$, the obvious inequality $p \geq 1$, and the previously obtained inequality $3n-3k-7<0$, we calculate the total initial charge as $\sum_f{(\deg(f)-3)}-2k \cdot \frac{k+1}{2k}+(m-2k) \cdot 1+0 = \sum_f{\deg(f)}-\sum_f{3}-(k+1)+(m-2k)=2m-3\ell-3k+m-1=3m-3k-3\ell-1=3m-3k-3(1+p-n+m)-1=3n-3k-4-3p \leq 3n-3k-4-3 \cdot 1=3n-3k-7<0$.

                Clearly, after the charge redistribution, all faces and all edges have non-negative charges. Let us calculate the charge in the pot after the charge redistribution. After the first stage, each leaf $2$-cycle receives charge $2$ either by rule (R3) or by rule (R4). Each unary vertex (that has out-degree exactly $1$) in the rooted forest $F$ is either an E-NB-cycle or an NE-NL-cycle and receives charge $1$ either by rule (R1) or by rule (R2), respectively. Each branching $2$-cycle either does not receive any charge or receives charge $1$ by rule (R2), depending on whether it is exclusively empty or not. By Lemma~\ref{lemma:rooted_directed_forest_weight_lower_bound}, the total charge of the $2$-cycles after the first stage is at least $k+1$. Therefore, the pot receives at least $k+1$ of charge by rule (R5). Rule (R6) is applied exactly $2k$ times, hence exactly $2k \cdot \frac{k+1}{2k}=k+1$ of charge is taken from the pot. Therefore, the charge in the pot after the charge redistribution is at least $k+1-(k+1)=0$. So, the charge in the pot after the charge redistribution is also non-negative. This implies that the total charge after the charge redistribution is non-negative, which contradicts the negative total initial charge.
            \end{proof}

            As mentioned above, there is an alternative way of dealing with parallel edges with multiplicity greater than $2$ without assuming the edge-minimality of a counterexample and without Lemma~\ref{lemma:without_2-faces_min_counterexample_parallel_edges_multiplicity_geq_3_reducible}. Namely, we can consider a set $S$ of $2$-cycles where, for each pair of vertices with parallel edges between them, we arbitrarily choose one $2$-cycle. Then the number of $2$-cycles in $S$ would be exactly $k$. Then, instead of defining the exclusivity (exclusive interior, exclusively empty $2$-cycle) as above, we can define \emph{$S$-exclusivity} where the requirements are applied only to $2$-cycles from $S$ instead of to all $2$-cycles. Specifically, for a $2$-cycle $C$, we can define the \emph{$S$-exclusive interior} of $C$ as $Int(C) \setminus \left(\cup_{D \in S, D \neq C}{\overline{Int}(D)}\right)$. We can define \emph{$S$-edges} as edges belonging to $2$-cycles from $S$ and \emph{non-$S$-edges} as all other edges, not belonging to $2$-cycles from $S$. All $S$-edges are parallel, but non-$S$-edges could include both non-parallel ones and parallel ones. We would treat $S$-edges and non-$S$-edges the same as we treat parallel ones and non-parallel ones in the current proof. For example, in the discharging, each $S$-edge would get the initial charge $-\frac{k+1}{2k}$, and each non-$S$-edge would get the initial charge $1$. That approach could be useful for a potential future proof of an improved lower bound where we need the edge-\emph{maximality} of a vertex-minimal counterexample for other purposes and thus cannot use the edge-\emph{minimality}. However, in the current proof, we use the simpler approach of assuming the edge-minimality and using Lemma~\ref{lemma:without_2-faces_min_counterexample_parallel_edges_multiplicity_geq_3_reducible}, without introducing $S$-exclusivity.

        \subsection{The construction}
            \begin{lemma}\label{lemma:disjoint_union_sum_a}
                Let $M$ be a multigraph that is a disjoint union of multigraphs $M_1$, \ldots, $M_k$, where $k \geq 1$. Then $a(M)=\sum_{i=1}^{k}{a(M_i)}$.
            \end{lemma}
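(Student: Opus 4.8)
The plan is to prove the two inequalities $a(M) \geq \sum_{i=1}^{k} a(M_i)$ and $a(M) \leq \sum_{i=1}^{k} a(M_i)$ separately, both resting on the single structural observation that, since $M$ is a disjoint union of $M_1, \ldots, M_k$, there are no edges between distinct components $M_i$ and $M_j$. Consequently, every cycle of $M$ lies entirely within one of the $M_i$, so a set of vertices $S \subseteq V(M)$ induces a forest in $M$ if and only if, for every $i$, the set $S \cap V(M_i)$ induces a forest in $M_i$. Establishing this equivalence first is the natural way to organize both directions.

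For the lower bound, I would take, for each $i$, a maximum induced forest $F_i$ in $M_i$, so that $|V(F_i)| = a(M_i)$. Their union $F = \bigcup_{i=1}^{k} F_i$ satisfies $V(F) \cap V(M_i) = V(F_i)$, and so by the forward direction of the observation $F$ is an induced forest in $M$. Therefore $a(M) \geq |V(F)| = \sum_{i=1}^{k} a(M_i)$, since the vertex sets of the components are disjoint.

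For the upper bound, I would take a maximum induced forest $F$ in $M$, so that $|V(F)| = a(M)$. For each $i$, the reverse direction of the observation (equivalently, the fact that an induced subgraph of a forest is a forest) shows that $V(F) \cap V(M_i)$ induces a forest in $M_i$, hence $|V(F) \cap V(M_i)| \leq a(M_i)$. Since the vertex sets of the components partition $V(F)$, summing over $i$ gives $a(M) = |V(F)| = \sum_{i=1}^{k} |V(F) \cap V(M_i)| \leq \sum_{i=1}^{k} a(M_i)$. Combining the two inequalities yields the claimed equality.

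There is no genuine obstacle here: the entire content is the remark that cycles cannot cross between components, and everything else is bookkeeping about disjoint unions. If one preferred, the case of general $k$ could instead be deduced from the case $k = 2$ by induction on $k$, but the direct argument above handles all $k \geq 1$ uniformly and seems cleaner.
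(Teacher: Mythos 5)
Your proof is correct and follows essentially the same two-inequality argument as the paper: union the maximum induced forests $F_i$ for the lower bound, intersect a maximum induced forest of $M$ with each component for the upper bound. The only cosmetic difference is that you state the "cycles cannot cross components" observation explicitly up front, which the paper leaves implicit.
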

            \begin{proof}
                First, we prove that $a(M) \leq \sum_{i=1}^{k}{a(M_i)}$. Let $F$ be a maximum induced forest in $M$. Since $F$ is an induced subgraph of $M$ and $M$ is a disjoint union of $M_1$, \ldots, $M_k$, the intersection $F \cap M_i$ is an induced forest in $M_i$ for each $i$. Therefore, we have $a(M_i) \geq |V(F) \cap M_i|$. Now, we have $a(M) = |V(F)| = \sum_{i=1}^{k}{|V(F) \cap M_i|} \leq \sum_{i=1}^{k}{a(M_i)}$.

                Conversely, for each graph $M_i$, let $F_i$ be a maximum induced forest in $M_i$. Since there are no edges between different $M_i$, the union $F' = \bigcup_{i=1}^{k}{F_i}$ is an induced forest in $M$. Therefore, we have $a(M) \geq |V(F')| = \sum_{i=1}^{k}{|V(F_i)|} = \sum_{i=1}^{k}{a(M_i)}$.

                Combining the two obtained inequalities, we get the claimed equality $a(M) = \sum_{i=1}^{k}{a(M_i)}$.
            \end{proof}

            \begin{theorem}\label{theorem:without_2-faces_example_3n/7+4/7}
                There exists an infinite sequence of planar multigraphs admitting an embedding into the plane without $2$-faces with $a(M)=\frac{3}{7}n+\frac{4}{7}$, where $n$ is the number of vertices in $M$.
            \end{theorem}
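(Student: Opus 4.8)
The plan is to exhibit an explicit family $\{M_t\}_{t \geq 1}$ with $n = 7t+1$ vertices and $a(M_t) = 3t+1$, since then $a(M_t) = \frac{3}{7}(7t+1) + \frac{4}{7} = \frac{3}{7}n + \frac{4}{7}$, and letting $t \to \infty$ gives the required infinite sequence. (Note that $\frac{3n+4}{7}$ is an integer exactly when $n \equiv 1 \pmod 7$, which is why the family must be indexed this way.) The cleanest conceivable realization is to produce a single gadget $H$ on $7$ vertices admitting a $2$-face-free plane embedding with $a(H) = 3$, set $M_t$ to be the disjoint union of $t$ copies of $H$ together with one extra isolated vertex, and invoke Lemma~\ref{lemma:disjoint_union_sum_a} to conclude $a(M_t) = t\cdot a(H) + 1 = 3t+1$ at once.

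Constructing the gadget (or repeating unit) is where the content lies, and the two requirements pull against each other. Forcing $a = 3$ locally means every relevant $4$-subset must span a cycle, and since the target ratio $\frac{3}{7}$ lies below the ratio $\frac{1}{2}$ attainable by simple planar graphs, genuine parallel edges are unavoidable: a purely simple gadget cannot work. A first attempt is an octahedron (already $a=3$ on its $6$ vertices) with an apex doubly joined to a suitable triangle, but this gadget carries too many edges and, worse, its parallel edges bound empty digons, i.e.\ $2$-faces. The governing constraint is Lemma~\ref{lemma:planar_multigraph_without_2-faces_m_leq_3n-6}: a $2$-face-free (multi)graph satisfies $m \leq 3n-6$, and \emph{every} $2$-cycle must enclose edges on both sides. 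Hence the parallel edges must be \emph{nested}, each digon strictly containing inner structure, with triangulated annuli between consecutive layers so that every bounded face is a triangle. For this reason I expect $M_t$ to be taken connected, built by nesting $t$ copies of a unit that contributes $7$ vertices while lowering $a$ by exactly $4$ per unit, capped by a single extra vertex. If a disjoint presentation is used instead, one must additionally verify, by placing the components inside one another's outer faces (each of degree $\geq 3$) so that the merged outer face has large degree, that no new $2$-faces are created.

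Granting the construction, the value $a(M_t) = 3t+1$ would be established in two halves. For the lower bound I would exhibit an explicit induced forest of the claimed size, selecting from each unit the three vertices of a fixed maximum induced forest of that unit, adjoining the capping vertex, and checking acyclicity using that distinct units meet only along shared (or no) structure. The upper bound $a(M_t) \leq 3t+1$ is the main obstacle and the real work: one must show that no vertex set of size $3t+2$ is acyclic, using that each $2$-cycle forbids \emph{both} of its endpoints from any induced forest while each triangular face forbids its three vertices. Because the nesting couples the layers, this cannot be done layer-by-layer in isolation; it will require a global counting (or discharging) argument charging at most three retained vertices to each unit, in the same spirit as the bookkeeping of Theorem~\ref{theorem:without_2-faces_lower_bound_3n/10+7/30}. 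A secondary, genuinely topological obstacle is to certify that all the required parallel edges can be routed simultaneously so that every digon encloses edges on both sides, guaranteeing the absence of $2$-faces throughout.
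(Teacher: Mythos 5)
There is a genuine gap: your proposal is a roadmap with the correct target arithmetic ($n=7t+1$, $a=3t+1$) but no actual construction, and at the two places where the construction must be delivered it either leans on an object not known to exist or predicts difficulties that the right construction simply does not have. Your preferred realization --- a single $7$-vertex gadget $H$ with a $2$-face-free embedding and $a(H)=3$, repeated disjointly --- is not known to exist; if it did, disjoint copies would give $a(M)=\frac{3}{7}n$ exactly and would answer the paper's open Question~1, so this cannot be the ``cleanest'' route. You correctly diagnose why it fails (every digon must enclose edges on both sides, so parallel edges force nesting) and pivot to a nested connected construction, but you then stop at ``I expect'' and ``one must certify'': no gadget is exhibited, no embedding is verified, and no value of $a$ is computed.

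The key idea you are missing is that the nesting needs to be only \emph{topological}, not graph-theoretical. The paper's $M_k$ is, as an abstract multigraph, a disjoint union of $K_4$'s and triangles with all three edges doubled; the doubled triangle has $a=1$ on $3$ vertices and $K_4$ has $a=2$ on $4$ vertices, so one copy of each is a $7$-vertex unit contributing $3$ to $a$, and Lemma~\ref{lemma:disjoint_union_sum_a} gives $a(M_k)=3k+1$ immediately --- both the lower and the upper bound, with no global counting. The only work is the embedding: a doubled triangle has three digons in any embedding, so one stuffs a $K_4$ into one digon, the next unit of the recursion into a second, and lets the enclosing unit (or a final capping $K_4$) fill the third; an induction then checks that no $2$-face survives. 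Your assertion that ``the nesting couples the layers'' and that the upper bound ``will require a global counting (or discharging) argument'' is therefore the wrong turn: components placed inside one another's faces remain disconnected, so $a$ factors over components and the hard-looking upper bound is free. Without the explicit unit, the explicit nesting, and the inductive face check, the statement is not proved.
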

            \begin{proof}
                We define the sequence of plane multigraphs $\mathcal{N}_i$ recursively. Take $\mathcal{N}_1=\mathcal{K}_4$, where by $\mathcal{K}_4$ we denote the plane graph that is an embedding of $K_4$ into the plane. Then, for each $i$, take three vertices $u_i$, $v_i$, $w_i$. Connect $u_i$ and $v_i$ with two parallel edges. Put $w_i$ in the interior of the $2$-cycle formed by these parallel edges between $u_i$ and $v_i$. Connect $u_i$ and $w_i$ with two parallel edges. Connect $v_i$ and $w_i$ with two parallel edges. Put a copy of $\mathcal{K}_4$, disconnected from the rest of the multigraph, in the interior of the $2$-cycle formed by the parallel edges between $u_i$ and $w_i$. Put a copy of $\mathcal{N}_i$, disconnected from the rest of the multigraph, in the interior of the $2$-cycle formed by the parallel edges between $v_i$ and $w_i$. Denote the resulting plane multigraph by $\mathcal{N}_{i+1}$. Finally, we consider a plane multigraph $\mathcal{M}_i$ that is obtained from $\mathcal{N}_i$ by adding a copy of $\mathcal{K}_4$, disconnected from the rest of the multigraph, in the external $2$-face.

                \begin{figure}[htb]
                    \centering
                    \setlength{\unitlength}{0.8mm}
                    \begin{picture}(80,50)(10,25)
                        \qbezier(20,50)(50,100)(80,50)
                        \qbezier(20,50)(50,0)(80,50)
                        
                        \qbezier(20,50)(35,75)(50,50)
                        \qbezier(20,50)(35,24)(50,50)
                        
                        \qbezier(50,50)(65,75)(80,50)
                        \qbezier(50,50)(65,25)(80,50)
                        
                        \put(20,50){\circle*{1.5}}  
                        \put(80,50){\circle*{1.5}}  
                        \put(50,50){\circle*{1.5}}  
                        
                        \put(16,50){\makebox(0,0){$u_i$}}
                        \put(84,50){\makebox(0,0){$v_i$}}
                        \put(50,56){\makebox(0,0){$w_i$}}
                        
                        \put(35,59){\circle*{1.5}}  
                        \put(28,45){\circle*{1.5}}  
                        \put(42,45){\circle*{1.5}}  
                        \put(35,50){\circle*{1.5}}  
                        
                        \Line(35,59)(28,45)
                        \Line(35,59)(42,45)
                        \Line(28,45)(42,45)
                        \Line(35,50)(35,59)
                        \Line(35,50)(28,45)
                        \Line(35,50)(42,45)
                        
                        \put(65,50){\makebox(0,0){$\mathcal{N}_i$}}
                    \end{picture}
                    \caption{$\mathcal{N}_{i+1}$.}
                    \label{picture:without_2-faces_example_3n/7+4/7_N_i+1}
                \end{figure}

                Let us prove using induction on $i$ that none of the internal faces of $\mathcal{N}_i$ are $2$-faces and that the external face of $\mathcal{N}_i$ is a $3$-face for $i=1$ and a $2$-face for $i \geq 2$. To verify the base of induction for $i=1$, we observe that all four faces of $\mathcal{N}_1=\mathcal{K}_4$ are $3$-faces. Suppose that the statement holds for $\mathcal{N}_i$ and consider it for $\mathcal{N}_{i+1}$, where $i \geq 1$. By construction, the external face of $\mathcal{N}_{i+1}$ is incident only to the two parallel edges between $u_i$ and $v_i$, and hence it is a $2$-face. The vertex $w_i$ and the four edges connecting it to $u_i$ and $v_i$ break the interior of the $2$-cycle between $u_i$ and $v_i$ into four regions: the interior of the $2$-cycle between $u_i$ and $w_i$ that will be broken down further by the added copy of $\mathcal{K}_4$, the interior of the $2$-cycle between $v_i$ and $w_i$ that will be broken down further by the added copy of $\mathcal{N}_i$, a $3$-face that is incident to one of the parallel edges between $u_i$ and $v_i$, one of the parallel edges between $u_i$ and $w_i$, and one of the parallel edges between $v_i$ and $w_i$, and another $3$-face that is incident to the other edge between $u_i$ and $v_i$, the other edge between $u_i$ and $w_i$, and the other edge between $v_i$ and $w_i$. The interior of the $2$-cycle between $u_i$ and $w_i$ is broken by the added copy of $\mathcal{K}_4$ into four faces: three internal $3$-faces of the copy of $\mathcal{K}_4$ and one $5$-face that is incident to the three external edges of the copy of $\mathcal{K}_4$ and to the two parallel edges between $u_i$ and $w_i$. The interior of the $2$-cycle between $v_i$ and $w_i$ is broken by the added copy of $\mathcal{N}_i$ into the internal faces of the copy of $\mathcal{N}_i$, none of which are $2$-faces by the inductive hypothesis, and the remaining face that is incident to the external edges of the copy of $\mathcal{N}_i$ and to the two parallel edges between $v_i$ and $w_i$. That last remaining face is a $5$-face for $i=1$ because there are $3$ external edges in $\mathcal{N}_1$, and a $4$-face for $i \geq 2$ because there are two external edges in the copy of $\mathcal{N}_i$ by the inductive hypothesis. So, none of the internal faces of $\mathcal{N}_{i+1}$ are $2$-faces, which completes the inductive step.

                Now, for $i \geq 2$, the region of the plane that was the external $2$-face in $\mathcal{N}_i$ is broken by the added copy of $\mathcal{K}_4$ into four faces in $\mathcal{M}_i$: three internal $3$-faces of the copy of $\mathcal{K}_4$ and one $5$-face that is incident to the three external edges of the copy of $\mathcal{K}_4$ and to the two external parallel edges of $\mathcal{N}_i$. None of the other faces of $\mathcal{N}_i$ are $2$-faces, and they all are unchanged in $\mathcal{M}_i$. So, $\mathcal{M}_i$ does not contain any $2$-faces.

                Denote by $N_i$ and $M_i$ the underlying planar multigraphs of the plane multigraphs $\mathcal{N}_i$ and $\mathcal{M}_i$, respectively. Denote by $n_i'$ and $n_i$ the number of vertices in $N_i$ and $M_i$, respectively, and denote $a_i'=a(N_i)$ and $a_i=a(M_i)$. It is easy to calculate that, for each $i$, we have $n_{i+1}'=4+n_i'+3$. Observe that $N_{i+1}$ is a disjoint union of $K_4$, $N_i$, and $N_{i+1}[u_i,v_i,w_i]$. It can be directly verified that $a(K_4)=2$. Since the vertices $u_i$, $v_i$, $w_i$ have a pair of parallel edges between every pair of them, no more than one of them can belong to an induced forest in $N_{i+1}[u_i,v_i,w_i]$. On the other hand, any single one of them is an induced forest in $N_{i+1}[u_i,v_i,w_i]$. So, we have $a(N_{i+1}[u_i,v_i,w_i])=1$. Now, by Lemma~\ref{lemma:disjoint_union_sum_a}, we have $a_{i+1}'=a(K_4)+a(N_i)+a(N_{i+1}[u_i,v_i,w_i])=2+a_i'+1$. Using induction, we derive that $n_i'=4+7(i-1)=7i-3$ and $a_i'=2+3(i-1)=3i-1$.

                Finally, we have $n_i=n_i'+4=7i+1$. Observe that $M_i$ is a disjoint union of $N_i$ and $K_4$. By Lemma~\ref{lemma:disjoint_union_sum_a}, we have $a_i=a(N_i)+a(K_4)=a_i'+2=(3i-1)+2=3i+1$. Therefore, we have $a_i=3i+1=3\frac{n_i-1}{7}+1=\frac{3}{7}n_i+\frac{4}{7}$, which means that the sequence of multigraphs $\{M_i\}$ satisfies the claimed condition.
            \end{proof}

        \subsection{Concluding remarks and open questions}
            From Theorem~\ref{theorem:without_2-faces_lower_bound_3n/10+7/30} and Theorem~\ref{theorem:without_2-faces_example_3n/7+4/7}, the best possible lower bound on $a(M)$ for the variant without $2$-faces must be somewhere between $\frac{3}{10}n+\frac{7}{30}$ and $\frac{3}{7}n+\frac{4}{7}$. We do not know the best possible lower bound. Nor do we even know whether the exact value $\frac{3}{7}n$ (without any additive constant) is attained. We leave these as open questions.

            \begin{question}
                Does there exist a planar multigraph $M$ admitting an embedding into the plane without $2$-faces with $a(M) \leq \frac{3}{7}n$?
            \end{question}
            
            \begin{question}
                What is the best possible lower bound on $a(M)$ in terms of $n$ for every planar multigraph $M$ on $n$ vertices that admits an embedding into the plane without $2$-faces?
            \end{question}

    \printbibliography
\end{document}